\DeclareMathOperator{\SL}{\operatorname{SL}}
\DeclareMathOperator{\ad}{\operatorname{ad}}
\DeclareMathOperator{\Ad}{\operatorname{Ad}}
\DeclareMathOperator{\Tr}{\operatorname{Tr}}
\DeclareMathOperator{\id}{\operatorname{id}}
\newcommand{\Fig}[1]{\parbox{12pt}{\Huge #1}}
\def\Zero{\Fig{0}}
\newtheorem{theorem}{Theorem}[section]
\newtheorem*{theorem*}{Theorem}
\newtheorem{corollary}[theorem]{Corollary}
\newtheorem{lemma}[theorem]{Lemma}
\theoremstyle{definition}
\newtheorem{definition}[theorem]{Definition}
\newtheorem{example}[theorem]{Example}
\newtheorem{remark}[theorem]{Remark}
\begin{document}

\title{On the Positivity of Kirillov's Character Formula
}

\author{Ehssan Khanmohammadi}

\email{ehssan@pm.me}

\maketitle

\begin{abstract}
We give a direct proof for the positivity of Kirillov's character on the convolution algebra of smooth, compactly supported functions on a connected, simply connected nilpotent Lie group $G$. Then we use this positivity result to construct a representation of $G\times G$ and establish a $G\times G$-equivariant isometric isomorphism between our representation and the Hilbert--Schmidt operators on the underlying representation of $G$. In fact, we provide a more general framework in which we establish the positivity of Kirillov's character for coadjoint orbits of groups such as $\SL(2, \mathbb{R})$ under additional hypotheses that are automatically satisfied in the nilpotent case. These hypotheses include the existence of a real polarization and the Pukanzsky condition.
\end{abstract}
\keywords{Keywords: Kirillov's character formula, Coadjoint orbit, Nilpotent Lie group, Positivity, Quantization, Polarization, The GNS construction.\\

\subjclass{22E30\and 22E27\and 46L05}}

\section{Introduction}
In his fundamental paper \cite{K62}, Kirillov proved that coadjoint orbits of a connected, simply connected nilpotent Lie group correspond, under quantization, to the equivalence classes of its irreducible unitary representations. The theory of geometric quantization due to Kirillov, Kostant~\cite{K70}, and Souriau~\cite{S70} has shown that this close connection extends to many other groups. 

Kirillov's character formula says that the characters of irreducible unitary representations of a Lie group $G$ ``should'' be given by an equation of the form
	\begin{equation}\label{E:CharacterFormula}
	\Tr\pi_{\mathcal{O}}(\exp X)
	=
	j^{-1/2}(X)\int_{\mathcal{O}}e^{i\ell(X)+\sigma}
	\end{equation}
where $\mathcal{O}$ is the coadjoint orbit in $\mathfrak{g}^*$ corresponding to $\pi_{\mathcal{O}}\in \widehat{G}$, $\sigma$ is a canonical symplectic form on $\mathcal{O}$, and $j$ is the analytic function on $\mathfrak{g}$ defined by the formula
	\[
	j(X)=\det\left(\frac{\sinh(\ad X/2)}{\ad X/2}\right).
	\]
Kirillov proved his character formula for simply connected nilpotent and simply connected compact Lie groups \cite{K62,K68} and conjectured its universality. The validity of this conjecture has been verified for some other classes of Lie groups, most notably for the case of tempered representations of reductive Lie groups by Rossmann~\cite{R78}; see also \cite{V79}. Moreover,  Atiyah--Bott~\cite{AB84} and Berline--Vergne~\cite{BV82} following the work of Duistermaat--Heckman~\cite{DH82}, have shown that for compact Lie groups, Kirillov's character formula is equivalent to the Weyl character formula.

Our main contributions in this paper are as follows. Rather than construct representations, compute their characters, and compare the results with
Kirillov's character formula, we work directly with Kirillov's formula and give direct arguments to
prove positivity in the sense of operator algebras. More precisely, we give a direct proof for the positivity of Kirillov's character on the convolution algebra of the Schwartz functions on a connected nilpotent Lie group. The fact that Kirillov's character defines a positive trace is remarkable in light of the Gelfand--Naimark--Segal (GNS) construction from operator algebra theory, since GNS implies, roughly speaking, that any positive linear functional on a Lie group is the character of a group representation. In the last section, we use our positivity result and the GNS to construct a representation of $G\times G$ for a connected, simply connected nilpotent Lie group $G$. Then in Theorem~\ref{T:comparison of quantizations} we discuss the relation between the representations that we have obtained from the GNS construction, and the Kirillov representations corresponding to the coadjoint orbits of $G$. Indeed, we prove that there is a $G\times G$-equivariant isometric isomorphism between our representation and the Hilbert--Schmidt operators on the underlying representation of $G$. We deal with nilpotent groups this way, thereby recovering some of the first results obtained by Kirillov. Moreover, in Theorem~\ref{T:Positivity for SL(2, R)}, we prove the positivity of Kirillov's character for coadjoint orbits of unimodular groups such as $\SL(2, \mathbb{R})$ in the presence of supplementary geometric hypotheses on the orbits that are automatically satisfied in the nilpotent case. One hypothesis is the existence of a real polarization; a second is a completeness hypothesis on the
(locally affine) fibers of the associated Lagrangian fibration.
\section{Some Notations and Background Material}\label{C:Background}
In this section we fix some notations and collect several standard results that we will use freely in the sequel. 
\begin{definition}\label{D:modular-function}
Let $G$ be a locally compact group $G$ with a left Haar measure $d\mu$. The \emph{modular function} $\Delta\colon G\to (0, \infty)$ is defined via
	\[
	\Delta(y) \int_G f(xy)\, d\mu(x)
	=
	\int_G f(x)\, d\mu(x)
	\]
for $x$ and $y$ in $G$ and all $f\in L^1(G)$ so that
	\[
	\int_G f(x^{-1})\, d\mu(x)
	=
	\int_G f(x)\Delta(x^{-1})\, d\mu(x).
	\]
\end{definition}
If $G$ is a Lie group, then it is well known that $\Delta(g)=|\det \Ad(g^{-1})|$. This implies, for instance, that any connected nilpotent Lie group is unimodular, that is, for such groups the modular function is constant and everywhere equal to one.
\begin{definition} Let $G$ be a locally compact group with a left Haar measure $d\mu$. Given functions $f$ and  $g$ on $G$, their \emph{convolution} $f*g$ is the function on $G$ defined by
	\begin{align*}
	f*g(x)
	&=
	\int_Gf(y)g(y^{-1}x)\, d\mu(y)
	\\
	&=
	\int_Gf(xy)g(y^{-1})\, d\mu(y)
	\end{align*}
whenever one (and hence both) of these integrals makes sense.
\end{definition}
Let $(\pi, V_\pi)$ be a unitary representation of a locally compact group $G$. Then $\pi$ induces a continuous homomorphism of Banach $*$-algebras from $L^1(G)$ to the space of bounded operators $\mathcal{B}(V_{\pi})$ via Bochner integration. By abuse of notation, we denote this induced $*$-homomorphism by $\pi$. Hence,
	\[
	\pi\colon L^1(G)\to \mathcal{B}(V_\pi) \quad \text{and} \quad \pi(f)=\int_G f(g)\pi(g)\, dg.
	\]
Here we are using the facts that $L^1(G)$ is a Banach algebra under convolution, which is just the usual group ring $\mathbb{C}G$ if $G$ is finite, and that $L^1(G)$ has a natural isometric (conjugate-linear) involution $f\mapsto f^*$, where 
	\[
	f^*(x)=\overline{f(x^{-1})}\Delta (x^{-1}).
	\]
Note that these definitions are formulated so that $f*g=L(f)(g)$ for the left regular representation $(L, L^1(G))$.
\begin{definition} We say that an irreducible unitary representation $\pi$ of a Lie group $G$ has a \emph{global or Harish-Chandra character} $\Theta$ if $\pi(f)$ is of trace class for all $f\in C_c^\infty(G)$ and moreover $f\mapsto \Theta(f)=\Tr\pi(f)$ is a distribution.
\end{definition}
The next two results provide sufficient conditions for  $\pi(f)$ to be trace class.
\begin{theorem}[Harish-Chandra \cite{Harish56}] Let $G$ be a connected semisimple Lie group with finite center. Then for every irreducible unitary representation $\pi$ of $G$ and every  $f\in C_c^{\infty}(G)$, the operator $\pi(f)$ is trace class and the map $f\mapsto \Tr \pi(f)$ is a distribution on $G$.
\end{theorem}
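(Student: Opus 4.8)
The plan is to prove the sharper statement that $\pi(f)$ is in fact of trace class for every $f\in C_c^\infty(G)$, with trace norm controlled by a continuous seminorm on $C_c^\infty(G)$; the claim that $\Theta(f)=\Tr\pi(f)$ is a distribution then follows immediately. Fix a maximal compact subgroup $K\subset G$ (available since $G$ is semisimple with finite center) and decompose the representation space into its $K$-isotypic components, $V_\pi=\bigoplus_{\delta\in\widehat K}V_\pi(\delta)$ (a Hilbert space direct sum). The geometric heart of the matter is an elliptic-regularity phenomenon: letting $\Omega_K\in U(\mathfrak{k})$ denote the Casimir element of $K$, it acts on each $V_\pi(\delta)$ by a single scalar $c(\delta)\ge 0$, and (with the sign convention making $\Omega_K\ge 0$) these scalars tend to infinity as the highest weight of $\delta$ grows, with only finitely many $\delta$ lying below any fixed bound.

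First I would record the smoothing identity $\pi(D)\pi(f)=\pi(D\cdot f)$, where $D\in U(\mathfrak{g})$ acts on $f$ as a left-invariant differential operator; since $D\cdot f\in C_c^\infty(G)$ for every $D$, the operator $(1+\pi(\Omega_K))^N\pi(f)=\pi\bigl((1+\Omega_K)^N\cdot f\bigr)$ is bounded for all $N$. Writing $g_N:=(1+\Omega_K)^N\cdot f\in C_c^\infty(G)$, this yields the factorization $\pi(f)=(1+\pi(\Omega_K))^{-N}\pi(g_N)$, in which the second factor is bounded with $\|\pi(g_N)\|_{\mathrm{op}}\le\|g_N\|_{L^1}$, while the first factor is diagonalized by the $K$-type decomposition, acting on $V_\pi(\delta)$ by $(1+c(\delta))^{-N}$ with multiplicity $\dim V_\pi(\delta)$.

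The decisive input is Harish-Chandra's admissibility theorem: because $\pi$ is irreducible and unitary on a group with finite center, every $K$-multiplicity is finite and bounded by $\dim\delta$, so that $\dim V_\pi(\delta)=m_\pi(\delta)\dim\delta\le(\dim\delta)^2$ grows at most polynomially in the highest weight of $\delta$. Combined with the polynomial growth of $\dim\delta$ and the quadratic growth of $c(\delta)$ (Weyl's dimension and Casimir-eigenvalue asymptotics), this makes $\sum_{\delta}\dim V_\pi(\delta)\,(1+c(\delta))^{-N}$ convergent once $N$ is taken large. Hence $(1+\pi(\Omega_K))^{-N}$ is of trace class, and therefore $\pi(f)$, being the product of a trace-class operator with a bounded one, is itself trace class. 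I expect the verification of admissibility — controlling the $K$-multiplicities via the action of the center $Z(\mathfrak{g})$ of the enveloping algebra together with Schur's lemma — to be the genuine obstacle; the remainder is soft functional analysis.

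Finally, to see that $\Theta$ is a distribution I would estimate $|\Theta(f)|=|\Tr\pi(f)|\le\|(1+\pi(\Omega_K))^{-N}\|_{\mathrm{tr}}\,\|\pi(g_N)\|_{\mathrm{op}}\le C\,\|(1+\Omega_K)^N\cdot f\|_{L^1}$. For $f$ supported in a fixed compact set, this last quantity is dominated by a finite sum of suprema of derivatives of $f$ of order at most $2N$, which is precisely a continuous seminorm for the topology of $C_c^\infty(G)$. Continuity of $f\mapsto\Theta(f)$ follows, completing the argument.
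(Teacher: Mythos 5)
You should first be aware that the paper does not prove this statement at all: it is quoted as background material, with the proof deferred to Harish-Chandra's original paper \cite{Harish56}, so there is no internal argument to compare against. Your sketch is the standard modern proof (the one in Knapp's and Wallach's books): factor $\pi(f)=(1+\pi(\Omega_K))^{-N}\pi\bigl((1+\Omega_K)^N\cdot f\bigr)$, diagonalize the first factor over the $K$-isotypic decomposition, and combine admissibility with the Weyl dimension formula and the quadratic growth of the Casimir eigenvalues to make $\sum_\delta \dim V_\pi(\delta)\,(1+c(\delta))^{-N}$ converge for large $N$; your closing estimate $|\Tr\pi(f)|\le \|(1+\pi(\Omega_K))^{-N}\|_{\mathrm{tr}}\,\|(1+\Omega_K)^N\cdot f\|_{L^1}$ is exactly the right way to get the distribution property, and it is correct. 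Two caveats. First, the entire weight of the theorem rests on the admissibility bound $m_\pi(\delta)\le\dim\delta$, which you invoke but do not prove; you flag this honestly, but one should be clear that what you have written is a reduction of the trace-class theorem to admissibility (itself a theorem of Harish-Chandra) plus soft analysis, not a self-contained proof. Second, a small technical slip: composing $\pi(X)$ on the \emph{left} of $\pi(f)$ differentiates $f$ along the generators of left translations, i.e.\ by \emph{right}-invariant vector fields, not left-invariant ones as you state; since the Casimir $\Omega_K$ is fixed by the antipode of the enveloping algebra of $\mathfrak{k}$, this does not damage the argument, but the convention should be stated correctly, and one should also record the routine verification that $1+\pi(\Omega_K)$ is essentially self-adjoint on $K$-finite vectors so that its negative powers are well defined, bounded, and diagonal with entries $(1+c(\delta))^{-N}$.
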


\begin{theorem}[Kirillov \cite{K62}]\label{T:trace class-nilpotent} Let $G$ be a nilpotent Lie group. Then for every irreducible unitary representation $\pi$ of $G$ and every Schwartz function $f\in \mathcal{S}(G)$, the operator $f\mapsto \Tr \pi(f)$ is trace class and the map $f\mapsto \Tr \pi(f)$ is a tempered distribution on $G$.
\end{theorem}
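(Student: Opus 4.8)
The plan is to prove the statement via Kirillov's orbit method, which produces a concrete model of $\pi$ in which $\pi(f)$ becomes an integral operator with an explicit kernel. Since every connected nilpotent Lie group is covered by a simply connected one and its irreducible representations factor through the cover, I may assume $G$ is connected and simply connected; then the exponential map $\exp\colon\mathfrak{g}\to G$ is a global diffeomorphism and, by the Baker--Campbell--Hausdorff formula, multiplication and inversion are polynomial in exponential coordinates. By Kirillov's theory, the representation $\pi$ corresponding to an orbit $\mathcal{O}=G\cdot\ell$ is unitarily equivalent to the monomial representation $\operatorname{Ind}_H^G\chi_\ell$, where $\mathfrak{h}\subset\mathfrak{g}$ is a polarization subordinate to $\ell$, $H=\exp\mathfrak{h}$, and $\chi_\ell(\exp X)=e^{i\langle\ell,X\rangle}$ for $X\in\mathfrak{h}$. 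Choosing a Jordan--H\"older basis adapted to $\mathfrak{h}$, I realize this representation on $L^2(\mathbb{R}^k)$, where $2k=\dim\mathcal{O}$, with a polynomial section $s\colon G/H\cong\mathbb{R}^k\to G$.

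In this model $\pi(f)$ is the integral operator
\[
(\pi(f)\varphi)(x)=\int_{\mathbb{R}^k}K_f(x,y)\,\varphi(y)\,dy,
\qquad
K_f(x,y)=\int_H f\bigl(s(x)\,h\,s(y)^{-1}\bigr)\,\overline{\chi_\ell(h)}\,dh,
\]
where the $H$-integral is taken against the character $\chi_\ell$; in exponential coordinates this is nothing but a partial Fourier transform in the $\mathfrak{h}$-directions composed with the polynomial change of variables coming from group multiplication. The crux of the argument --- and the step I expect to demand the most care --- is to show that $f\mapsto K_f$ maps $\mathcal{S}(G)$ continuously into $\mathcal{S}(\mathbb{R}^k\times\mathbb{R}^k)$. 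Since $f$ is Schwartz, the phase $\langle\ell,\cdot\rangle$ is linear, and the maps $(x,h,y)\mapsto s(x)hs(y)^{-1}$ are polynomial, integration over $H$ against $\chi_\ell$ preserves smoothness and rapid decay, and every Schwartz seminorm of $K_f$ should be dominated by finitely many Schwartz seminorms of $f$. The bookkeeping required to control the polynomial growth introduced by the change of variables against the decay of $f$ is the only genuinely technical point.

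Granting that $K_f\in\mathcal{S}(\mathbb{R}^k\times\mathbb{R}^k)$, trace-class follows from a standard factorization: letting $P=1-\Delta+|x|^2$ denote the harmonic oscillator on $\mathbb{R}^k$, one has that $P^{-N}$ is Hilbert--Schmidt for $N$ large, while $P^N\pi(f)$ again has a Schwartz kernel (namely $P_x^N K_f$) and hence is Hilbert--Schmidt; writing $\pi(f)=P^{-N}\,(P^N\pi(f))$ exhibits $\pi(f)$ as a product of two Hilbert--Schmidt operators, so it is trace class with
\[
\Tr\pi(f)=\int_{\mathbb{R}^k}K_f(x,x)\,dx.
\]
Finally, the functional $K\mapsto\int K(x,x)\,dx$ is continuous on $\mathcal{S}(\mathbb{R}^k\times\mathbb{R}^k)$, so composing with the continuous map $f\mapsto K_f$ shows that $f\mapsto\Tr\pi(f)$ is continuous for the Schwartz topology on $\mathcal{S}(G)$; that is, it is a tempered distribution.
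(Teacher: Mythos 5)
First, a point of reference: the paper itself does not prove this statement at all --- it is quoted as background and attributed to Kirillov \cite{K62} --- so your proposal can only be measured against the classical argument (Kirillov's, essentially as presented in \cite[Theorem 4.2.1]{CG90}). Your architecture is exactly that classical one: realize $\pi\cong\operatorname{Ind}_H^G\chi_\ell$ on $L^2(\mathbb{R}^k)$ via a basis adapted to a polarization, write $\pi(f)$ as an integral operator with kernel $K_f$, prove that $f\mapsto K_f$ is continuous from $\mathcal{S}(G)$ into $\mathcal{S}(\mathbb{R}^k\times\mathbb{R}^k)$, and conclude. The two closing steps are correct as written: the factorization $\pi(f)=P^{-N}\left(P^N\pi(f)\right)$ into a product of Hilbert--Schmidt operators does give trace class together with $\Tr\pi(f)=\int K_f(x,x)\,dx$, and temperedness follows by composing the two continuous maps.

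The genuine gap is in the step you yourself flag as the crux, and it matters because the mechanism you offer for it would fail if pursued. You argue that, since $f$ is Schwartz and the substitutions are polynomial, ``integration over $H$ against $\chi_\ell$ preserves smoothness and rapid decay,'' the only issue being bookkeeping of polynomial growth against the decay of $f$. But the decay of $f$ cannot by itself produce decay of $K_f$: along the diagonal $x=y$ one has $s(x)\,h\,s(y)^{-1}=e$ at $h=e$ for every $x$, so the integrand defining $K_f(x,x)$ is of size $|f(e)|$ near $h=e$ no matter how large $x$ is, and no estimate trading polynomial growth against the decay of $f$ can yield the required decay of $K_f(x,x)$. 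The decay in such directions comes from the oscillatory factor $\overline{\chi_\ell(h)}$, i.e.\ from the \emph{smoothness} of $f$ via the Fourier transform, combined with structural facts about the polarization. Already for the Heisenberg group with $\ell=\lambda Z^*$, $\lambda\neq 0$, a direct computation gives $K_f(x,y)=\Phi\left(x-y,\,\lambda(x+y)/2\right)$, where $\Phi\in\mathcal{S}(\mathbb{R}^2)$ is obtained from $f$ by Fourier transforms in the $Y$- and $Z$-directions: the decay in $x-y$ does come from the decay of $f$, but the decay in $x+y$ is the decay of a Fourier transform of a smooth function, and the properness of the map $(x,y)\mapsto\left(x-y,\lambda(x+y)/2\right)$ uses $\lambda\neq 0$, i.e.\ uses which coadjoint orbit one is on. In general one must take the full partial Fourier transform in the $\mathfrak{h}$-directions and verify that the resulting polynomial substitution in $(x,y)$ is a proper change of variables; this is precisely where the adapted Malcev basis and the hypothesis that $\mathfrak{h}$ is a polarization subordinate to $\ell$ enter, and it is the actual content of \cite[Theorem 4.2.1]{CG90}. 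A second, minor, gap: your opening reduction to the simply connected case is not automatic, since a Schwartz function on $G$ pulls back to a periodic, non-Schwartz function on the universal cover $\tilde G$; one needs a cutoff along the central lattice $\pi_1(G)$ to write $\pi(f)=\tilde\pi(g)$ for some $g\in\mathcal{S}(\tilde G)$.
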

Let $(\pi, V_{\pi})$ be an irreducible unitary representation. Observe that the above discussions imply that $\pi(f*f^*)=\pi(f)\pi(f)^*$. Thus, the distributional character of $(\pi, V_{\pi})$, whenever defined, is \emph{convolution-positive} (or \emph{positive} for short) in the sense that
	\begin{equation}\label{E:positivity of trace}
	\Tr \pi(f*f^*)=\Tr\pi(f)\pi(f)^*=\|\pi(f)\|_{\text{HS}}^2\ge 0.
	\end{equation}
To close this section, we introduce Kirillov's character formula which says that the characters $\chi$ of irreducible unitary representations of a Lie group $G$ ``should'' be given by an equation of the form
	\begin{equation}
	\chi(\exp X)=j^{-1/2}(X)\int_{\mathcal{O}} e^{i\ell(X)} \, d\mu_{\mathcal{O}}(\ell).
	\end{equation}
Here $\mathcal{O}$ is a coadjoint orbit in $\mathfrak{g}^*$, $\mu_\mathcal{O}$ is the canonical (or Liouville) symplectic measure on $\mathcal{O}$, and $j$ is the Jacobian of the exponential map $\exp\colon \mathfrak{g}\to G$ given by the formula
	\[
	j(X)=\det\left(\frac{\sinh(\ad X/2)}{\ad X/2}\right)
	\]
whenever $G$ is unimodular. This character formula should be interpreted as an equation of distributions on a certain space of test functions on $\mathfrak{g}$ as follows. For all smooth functions $f$ compactly supported in a sufficiently small neighborhood of the origin in $\mathfrak{g}$,
	\begin{equation}\label{E:CharacterFormulaInt}
		\Tr \int_{\mathfrak{g}} f(X)\pi(\exp X)\, dX=
	\int_{\mathcal{O}}\int_{\mathfrak{g}}
	 e^{i\ell(X)} f(X)j^{-1/2}(X)\, dX d\mu_{\mathcal{O}}(\ell)
	\end{equation}
where $\pi$ is the representation with character $\chi$.
\begin{theorem}[Kirillov \cite{K62}]\label{T:CharacterFormula} Suppose $G$ is a connected nilpotent Lie group. Then the irreducible unitary representations of $G$ are in natural one-to-one correspondence with the integral orbits of $G$.
Moreover, if $G$ is simply connected and $\pi$ is an irreducible unitary representation of $G$ with the associated coadjoint orbit $\mathcal{O}$, then \eqref{E:CharacterFormulaInt} holds.
\end{theorem}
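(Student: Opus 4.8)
The plan is to treat both assertions by induction on $\dim G$, taking the theory of real polarizations together with Mackey's theory of induced representations as the main engine. For the correspondence I would attach to each $\ell\in\mathfrak{g}^*$ a subalgebra $\mathfrak{h}\subseteq\mathfrak{g}$ that is \emph{subordinate} to $\ell$, meaning $\ell([\mathfrak{h},\mathfrak{h}])=0$, and of the maximal admissible dimension $\tfrac12(\dim\mathfrak{g}+\dim\mathfrak{g}^\ell)$, where $\mathfrak{g}^\ell$ is the radical of the alternating form $B_\ell(X,Y)=\ell([X,Y])$; such a polarization exists by Vergne's construction along a flag of ideals. Writing $H=\exp\mathfrak{h}$ and defining the unitary character $\chi_\ell(\exp X)=e^{i\ell(X)}$ on $H$, I would set $\pi_\ell=\mathrm{Ind}_H^G\chi_\ell$ and verify that $\pi_\ell$ is irreducible, that its class depends only on the orbit $\mathcal{O}=G\cdot\ell$ and not on the choices of $\ell\in\mathcal{O}$ or of $\mathfrak{h}$, and that every element of $\widehat{G}$ arises this way. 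When $G$ is simply connected the exponential map is a global diffeomorphism, so $H$ is simply connected, $\chi_\ell$ is automatically well defined, and every orbit is integral; for the merely connected case the integrality condition is exactly the requirement that $\chi_\ell$ descend to the (possibly non-simply-connected) subgroup $H$.

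For the character formula I would first record the key simplification in the nilpotent setting: since $\ad X$ is nilpotent, the operator $\sinh(\ad X/2)/(\ad X/2)$ is unipotent, so $j(X)\equiv1$ and \eqref{E:CharacterFormulaInt} asserts precisely that the character distribution is the Euclidean Fourier transform of the Liouville measure on $\mathcal{O}$. The induction then runs through Kirillov's structural lemma: either $\ell$ vanishes on a one-dimensional central ideal $\mathbb{R}Z$, in which case $\pi_\ell$ factors through $G/\exp(\mathbb{R}Z)$ and $\mathcal{O}$ is inflated from the corresponding orbit in $(\mathfrak{g}/\mathbb{R}Z)^*$, so the formula follows directly from the inductive hypothesis; or one produces a codimension-one ideal $\mathfrak{g}_0$ and a transverse element $X_1$ with $\pi_\ell\cong\mathrm{Ind}_{G_0}^G\pi_{\ell_0}$, where $\ell_0=\ell|_{\mathfrak{g}_0}$. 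In the latter case I would realize $\pi_\ell(f)$, for $f$ supported near the origin, as an integral operator with operator-valued kernel on $L^2(G/G_0,V_{\pi_{\ell_0}})\cong L^2(\mathbb{R},V_{\pi_{\ell_0}})$, express its trace as the diagonal integral of the pointwise trace of this kernel, and invoke the inductive hypothesis on $G_0$ to evaluate the fiberwise contribution. On the geometric side I would use the projection $\mathfrak{g}^*\to\mathfrak{g}_0^*$ carrying $\mathcal{O}$ onto $\mathcal{O}_0$ to factor the Liouville measure by Fubini, thereby reducing the orbital Fourier integral for $G$ to that for $G_0$.

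The main obstacle is the bookkeeping that makes the trace computation match the orbital integral exactly, rather than up to an unknown constant. Although $j\equiv1$ removes the Jacobian factor, one must still pin down the mutually compatible normalizations of Haar measure on $G$, on $G_0$, and on $G/G_0$, of Lebesgue measure on $\mathfrak{g}$ via $\exp$, and of the Liouville measure on $\mathcal{O}$; the verification that these assemble correctly under induction rests on unimodularity of $G$ and on the symplectic geometry of the fibration $\mathcal{O}\to\mathcal{O}_0$, whose fibers contribute exactly the transverse $X_1$-integration. A second delicate point is analytic rather than algebraic: because the character is only a distribution, every step must be carried out against a test function $f\in C_c^\infty$ supported near the origin, and one must justify that $\pi_\ell(f)$ is trace class with a kernel regular enough to be integrated along the diagonal. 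Here Theorem~\ref{T:trace class-nilpotent} supplies the trace-class property, and the smoothing effect of convolving with $f$ supplies the kernel regularity, so that the diagonal integral is legitimate and the two recursions can be identified term by term.
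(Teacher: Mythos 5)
First, a point of orientation: the paper does not prove Theorem~\ref{T:CharacterFormula} at all. It is Kirillov's theorem, stated as background with the citation \cite{K62}, and the paper only ever uses it as a black box --- to observe via \eqref{E:positivity of trace} that positivity of $\chi_\mathcal{O}$ is already known indirectly before Theorem~\ref{T:Positivity for nilpotent Lie groups}, and again at the start of the proof of Theorem~\ref{T:comparison of quantizations}. So there is no proof of the paper's to compare yours with; your proposal has to be judged as a reconstruction of the classical argument (\cite{K62}, or \cite[Chapters 1--3]{CG90}). Judged that way, your skeleton is the standard one: Vergne's construction of a polarization along a flag of ideals, $\pi_\ell=\operatorname{Ind}_H^G\chi_\ell$, Mackey theory for irreducibility, independence of choices and exhaustion (be aware that this one sentence of yours is the bulk of the theorem, and is itself proved by the same dimension induction), $j\equiv 1$ because $\ad X$ is nilpotent, and induction on $\dim\mathfrak{g}$ through Kirillov's dichotomy. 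Note also that the identity the paper actually extracts from this circle of ideas in Section~\ref{S:Nilpotent Lie Groups} is Lipsman's reformulation (Theorem~\ref{T:Lipsman-Pukanzsky}, \cite{Lips}), whose proof is a cousin of what you outline.

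Two steps of your sketch need repair. The concrete error is geometric: in the induced case the restriction map $p\colon\mathfrak{g}^*\to\mathfrak{g}_0^*$ does \emph{not} carry $\mathcal{O}$ onto the single orbit $\mathcal{O}_0$. Writing $G=G_0\exp(\mathbb{R}X_1)$, one has $p(\mathcal{O})=\bigcup_{t\in\mathbb{R}}\Ad^*(\exp tX_1)\,\mathcal{O}_0$, a one-parameter family of pairwise distinct $G_0$-orbits; already for the Heisenberg group with $\mathfrak{g}_0=\operatorname{Span}\{Y,Z\}$ and $\ell=\gamma Z^*$, the orbit $\mathcal{O}_0$ is a point while $p(\mathcal{O})$ is a line. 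The correct statement is that the Liouville measure of $\mathcal{O}$ disintegrates over this family of orbits $\mathcal{O}_t$, and it is this $t$-family --- mirroring the Mackey decomposition $\operatorname{Res}_{G_0}\pi_\ell\cong\int^{\oplus}\pi_{\ell_t}\,dt$ --- that matches the diagonal variable of your operator-valued kernel on $L^2(\mathbb{R},V_{\pi_{\ell_0}})$; a Fubini step that lands on $\mathcal{O}_0$ alone equates quantities of different dimensions, and the two recursions will not match term by term. The second point is more minor, since you at least flag it: trace-classness (Theorem~\ref{T:trace class-nilpotent}) together with a regular kernel does not make the identity $\Tr\pi_\ell(f)=\int K(x,x)\,dx$ a formality; it is a theorem (Brislawn-type results for continuous kernels), and the classical self-contained route is to reduce to $f$ of the form $g*h$ with $g,h\in C_c^\infty(G)$ (or finite sums of such, by Dixmier--Malliavin factorization), so that $\pi_\ell(f)$ is a product of two Hilbert--Schmidt operators with explicit kernels and the diagonal integral follows from Fubini. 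With these two corrections, your plan is in outline Kirillov's proof of the statement \eqref{E:CharacterFormulaInt}.
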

\section{Nilpotent Lie Groups}\label{S:Nilpotent Lie Groups}
Nilpotent Lie groups and their representations have been studied extensively in the literature. From the many papers by Corwin, Greanleaf, Lipsman, Pukanzsky and others we only cite \cite{KL72}, \cite{P67}, and \cite{S68} and refer the reader to~\cite{CG90} for a more comprehensive list of bibliographies. For a nilpotent Lie group $G$, let $\chi_\mathcal{O}$ denote Kirillov's character corresponding to a coadjoint orbit $\mathcal{O}$ of $G$. That is,
	\[
	\chi_\mathcal{O}(f)=\int_{\mathcal O}\widehat{f\circ \exp}(\ell)\, d\mu_\mathcal{O}(\ell)=\int_{\mathcal O}\int_\mathfrak{g} f(\exp X) e^{i\ell(X)}\, dX \, d\mu_\mathcal{O}(\ell),\quad f\in \mathcal{S}(G).
	\]
We would like to show directly that if $G$ is a connected nilpotent Lie group, then for all $f\in \mathcal{S}(G)$,
	\[
	\chi_\mathcal{O}(f*f^*)	\ge 0. 
	\]
We first prove this result for the important case of the Heisenberg group where computations are short and insightful.

The Heisenberg group $H$ can be realized by the $3\times 3$ upper triangular matrices
	\[
	H=
	\left\{
	\begin{bmatrix}
	1&x&z\\
	0&1&y\\
	0&0&1
	\end{bmatrix}
	\Biggm |
	x, y, z\in \mathbb{R}
	\right\}
	\]
with the usual matrix multiplication. 
The Lie algebra $\mathfrak{h}$ of the Heisenberg group has three generators $X$, $Y$, and $Z$ satisfying the commutation relation $[X, Y]=Z$.
\begin{example}[Positivity of Kirillov's Character for $H$]
Choose $\ell \in \mathfrak{h}^*$ with $\ell(Z)=\gamma\ne 0$ so that the coadjoint orbit $\mathcal{O}$ through $\ell$ is the plane $Z^*=\gamma$ in the $X^*Y^*Z^*$-coordinate system in $\mathfrak{h}^*$. Let $f\in \mathcal{S}(H)$ be a function in the Schwartz space of the Heisenberg group and identify the coadjoint orbit $\mathcal{O}$ and the Lie algebra $\mathfrak{h}$ with the plane 
$\mathbb{R}^2\times \{\gamma\}=\{(\alpha, \beta, \gamma)\mid \alpha, \beta\in \mathbb{R}\}$ 
and 
$\mathbb{R}^3=\{(a, b, c)\mid a, b, c\in \mathbb{R}\}$, respectively. The Liouville measure on $\mathcal{O}$ is $\mu_\mathcal{O}=\frac{1}{2\pi\gamma}dX\wedge dY$. We compute
	\begin{align*}
	\chi_{\mathcal{O}}(f)&=
	\int_{\mathcal{O}}\int_{\mathfrak{h}}
		f(\exp X) e^{i\phi(X)}\,dX\,d\mu_{\mathcal{O}}(\phi)
		\\
	&=\frac{1}{2\pi|\gamma|}\int_{\mathcal{O}}\int_{\mathfrak{h}}
		f(\exp (a, b, c)) e^{i(a\alpha+b\beta+c\gamma)}\, 	da\,db\,dc\,d\alpha\, d\beta
		\\
	&=
	\frac{1}{2\pi|\gamma|}
	\int_c\int_{(\alpha, \beta)}\int_{(a, b)}
		f(\exp (a, b, c))e^{i(a\alpha+b\beta+c\gamma)}
	\, da\,db\, d\alpha\,d\beta\, dc
	\\
	&=
	\frac{2\pi}{|\gamma|}
	\int_c 
		f(\exp(0, 0, c))e^{ic\gamma}\, dc
	\end{align*}
where in the last step we have used the Fourier inversion theorem. Thus, we have found the following simplified form of Kirillov's formula for the coadjoint orbit $\mathcal{O}$ of the Heisenberg group:
	\[
	\chi_{\mathcal{O}}(f)=
	\frac{2\pi}{|\gamma|}
	\int_{\mathbb{R}}
		f(\exp tZ)e^{it\gamma}\, dt, \quad f\in \mathcal{S}(H).
	\]
Now we use this to check the positivity of $\chi_\mathcal{O}$. First note that
	\[
	\chi_{\mathcal{O}}(f*f^*)=
	\frac{2\pi}{|\gamma|}
	\int_{\mathbb{R}}\int_{H}
	f(\exp tZ\cdot h)\overline{f(h)} e^{it\gamma}\, d\lambda(h)\, dt,
	\]	
where $d\lambda=dx\wedge dy\wedge dz$ is the Haar measure on $H$, and that we can set $h=\exp(xX+yY+zZ)$ due to the surjectivity of $\exp\colon \mathfrak{h}\to H$. Since $\exp Z$ is in the center $Z(H)$ of $H$ we may rearrange the last expression to find
	
	\begin{align*}
	&\frac{2\pi}{|\gamma|}
	\int_{\mathbb{R}}\int_{\mathbb{R}^3}
		f(\exp(xX+yY+(z+t)Z))\overline{f(\exp(xX+yY+zZ))} 			e^{it\gamma}\, d\lambda\, dt
	\\
	&=
	\frac{2\pi}{|\gamma|}
	\int_{\mathbb{R}}\int_{\mathbb{R}^3}
		f(\exp(xX+yY+(z+t)Z))e^{i(z+t)\gamma}
		\overline{f(\exp(xX+yY+zZ))e^{iz\gamma}} \, d\lambda\, 		dt
	\\
	&=
	\frac{2\pi}{|\gamma|}
	\int_{\mathbb{R}^2}\left |\int_{\mathbb{R}}
		f(\exp(xX+yY+zZ))e^{iz\gamma} \, dz\right |^2 \, dy\, dx\ge 0,
	\end{align*}
as desired. Kirillov's character is also positive for any point-orbit $\mathcal{O}=\{\ell\}$ in the $X^*Y^*$-plane:
\begin{align*}
	\chi_{\mathcal{O}}(f*f^*)
	=
	\int_{\mathfrak{h}} 
		f*f^*(\exp X)e^{i\ell (X)}
		\, dX
	&=
	\int_H 
		f*f^*(x)e^{i\ell(\log x)}
	\, d\lambda(x)
	\\
	&=
	\int_H
		\int_H
			f(xy)\overline{f(y)}
		\, d\lambda(y)
		e^{i\ell(\log x)}
	\, d\lambda(x)
	\\
	&=
	\int_H\int_H
			f(x)\overline{f(y)}
			e^{i\ell(\log xy^{-1})}
	\, d\lambda(x)\, d\lambda(y)
	\\
	&=
	\left|
		\int_H f(x)e^{i\ell(\log x)}
		\, d\lambda(x)
	\right|^2\ge 0.
\end{align*}
The last equality is due to the facts that if $\xi, \eta\in \mathfrak{h}$, then 
	\[
	\exp\xi\exp \eta
	=
	\exp(\xi+\eta+\frac{1}{2}[\xi, \eta])
	\]
and that since $[\xi, \eta]\in \mathfrak{z}(\mathfrak{h})$, $\ell([\xi, \eta])=0$. Therefore, the map from $H$ to $\mathbb{C}$ defined by $x\mapsto \ell(\log x)$ is a group homomorphism.
\end{example}
To proceed with the case of general nilpotent Lie groups and beyond, we recall two definitions.
\begin{definition}\label{D:real-polarization} Let $\mathfrak{g}$ be a real finite-dimensional Lie algebra and let $\ell\in \mathfrak{g}^*$. A subalgebra $\mathfrak{m}\subset \mathfrak{g}$ is said to be a \emph{real polarization} subordinate to $\ell$ provided that $\ell([X, \mathfrak{m}])=0$ if and only if $X\in \mathfrak{m}$. 
\end{definition}
If $\mathfrak{g}$ is a nilpotent Lie algebra, and $\ell\in \mathfrak{g}^*$, then there exists a polarizing subalgebra subordinate to $\ell$. See \cite[Theorem 1.3.3]{CG90}.
\begin{definition} Let $G$ be a Lie group with Lie algebra $\mathfrak{g}$. Then the \emph{stabilizer subgroup} associated with $\ell\in \mathfrak{g}^*$ is
	\[
	R_\ell=\{g\in G\mid \Ad^*(g)\ell=\ell\}
	\]
which has the corresponding \emph{radical Lie algebra}
	\[
	\mathfrak{r}_\ell=\{X\in \mathfrak{g}\mid \ad^*(X)\ell=0\}.
	\]
\end{definition}
Throughout, we shall use the notations introduced in these definitions without further comment. 
Any polarizing subalgebra subordinate to $\ell$ lies halfway between the radical $\mathfrak{r}_\ell$ and the Lie algebra $\mathfrak{g}$. This motivates the following lemma.
\begin{lemma}\label{L:Orbit Diffeomorphism} Let $G$ be a Lie group with Lie algebra $\mathfrak{g}$ and a polarizing subalgebra $\mathfrak{m}$ subordinate to an element $\ell\in \mathfrak{g}^*$. Let $M=\exp \mathfrak{m}$. Then the mapping 
	\begin{align*}
	\theta\colon M/R_\ell&\to (\mathfrak{g}/\mathfrak{m})^*\\
	mR_\ell &\mapsto  m\cdot \ell -\ell.
	\end{align*}
is a diffeomorphism, provided that the orbit of $\ell$ under $M$, $\Ad^*(M)\ell$, is closed in $\mathfrak{g}^*$. In particular, $\theta$ is automatically a diffeomorphism if $G$ is a connected nilpotent Lie group.
\end{lemma}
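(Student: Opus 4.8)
The plan is to identify $(\mathfrak{g}/\mathfrak{m})^{*}$ with the annihilator $\mathfrak{m}^{\perp}=\{\phi\in\mathfrak{g}^{*}:\phi|_{\mathfrak{m}}=0\}$ and then check, in order, that $\theta$ is well defined with image in $\mathfrak{m}^{\perp}$, injective, an everywhere local diffeomorphism, and finally surjective. First I would record two structural facts. If $X\in\mathfrak{r}_\ell$ then $\ell([X,\mathfrak{g}])=0$, in particular $\ell([X,\mathfrak{m}])=0$, so $X\in\mathfrak{m}$ by the polarization condition; hence $\mathfrak{r}_\ell\subseteq\mathfrak{m}$ and $R_\ell\subseteq M$, making $M/R_\ell$ meaningful. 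Moreover the polarization condition says exactly that $\mathfrak{m}$ is its own orthogonal with respect to the skew form $B_\ell(X,Y)=\ell([X,Y])$, whose radical is $\mathfrak{r}_\ell$; thus $\mathfrak{m}/\mathfrak{r}_\ell$ is a Lagrangian subspace of the symplectic space $\mathfrak{g}/\mathfrak{r}_\ell$, which yields the dimension identity $\dim(M/R_\ell)=\dim\mathfrak{m}-\dim\mathfrak{r}_\ell=\tfrac12(\dim\mathfrak{g}-\dim\mathfrak{r}_\ell)=\dim\mathfrak{g}-\dim\mathfrak{m}=\dim(\mathfrak{g}/\mathfrak{m})^{*}$, which I will invoke for the dimension count below.

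Next I would verify that $\theta$ is well defined. To see $m\cdot\ell-\ell\in\mathfrak{m}^{\perp}$ for $m\in M$, differentiate $t\mapsto\langle\Ad^{*}(m\exp tZ)\ell,Y\rangle$ for $Y,Z\in\mathfrak{m}$; the derivative is $\pm\ell([Z,\Ad(m^{-1})Y])$ with both entries in $\mathfrak{m}$, hence zero since $\ell$ annihilates $[\mathfrak{m},\mathfrak{m}]$, and since $M$ is connected the map $m\mapsto (m\cdot\ell)|_{\mathfrak{m}}$ is constant equal to $\ell|_{\mathfrak{m}}$. Independence of the coset representative is immediate because $R_\ell$ fixes $\ell$, and injectivity is the exact converse: $m_1\cdot\ell=m_2\cdot\ell$ forces $m_2^{-1}m_1\in R_\ell$, i.e.\ $m_1R_\ell=m_2R_\ell$.

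The analytic heart is that $\theta$ is a local diffeomorphism. I would compute the differential at the base point: up to sign $d\theta_{eR_\ell}$ sends $Z\in\mathfrak{m}/\mathfrak{r}_\ell$ to $\ad^{*}(Z)\ell\in\mathfrak{m}^{\perp}$, whose kernel is $\mathfrak{m}\cap\mathfrak{r}_\ell=\mathfrak{r}_\ell$; so $d\theta_{eR_\ell}$ is injective, and by the Lagrangian dimension identity it is an isomorphism. To propagate this to all points I would use the cocycle (affine equivariance) identity $\theta(nm)=\Ad^{*}(n)\theta(m)+\theta(n)$ for $n,m\in M$, together with the fact that $\Ad^{*}(M)$ preserves $\mathfrak{m}^{\perp}$ (for $Y\in\mathfrak{m}$ one has $\langle\Ad^{*}(n)\phi,Y\rangle=\langle\phi,\Ad(n^{-1})Y\rangle=0$). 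Thus left translation by $n$ on $M/R_\ell$ is intertwined by $\theta$ with the affine diffeomorphism $\phi\mapsto\Ad^{*}(n)\phi+\theta(n)$ of $\mathfrak{m}^{\perp}$, so $\theta$ is a local diffeomorphism at every point.

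Finally, surjectivity. The image of $\theta$ is the translated orbit $\Ad^{*}(M)\ell-\ell$, which is open in $\mathfrak{m}^{\perp}$ by the local-diffeomorphism property. When $\Ad^{*}(M)\ell$ is closed in $\mathfrak{g}^{*}$ it is closed in the affine subspace $\ell+\mathfrak{m}^{\perp}$, so the image is also closed in $\mathfrak{m}^{\perp}$; since $\mathfrak{m}^{\perp}=(\mathfrak{g}/\mathfrak{m})^{*}$ is connected, a nonempty open-and-closed subset is everything, giving surjectivity. An injective, surjective local diffeomorphism is a diffeomorphism, which is the claim. For connected nilpotent $G$ the subgroup $M$ is again nilpotent and the coadjoint action is unipotent, so $\Ad^{*}(M)\ell$ is automatically closed (orbits of unipotent groups acting on an affine space are closed), which yields the ``in particular'' statement. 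I expect the surjectivity/closedness step to be the main obstacle: injectivity and the local-diffeomorphism property are essentially formal, but upgrading them to a \emph{global} diffeomorphism is exactly where the closedness hypothesis is needed, and it is this hypothesis that is supplied for free in the nilpotent setting.
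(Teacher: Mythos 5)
Your proposal is correct and follows essentially the same route as the paper's proof: well-definedness via invariance of $\ell|_{\mathfrak{m}}$ under $M$, injectivity from the definition of $R_\ell$, a rank computation (using $\dim\mathfrak{m}-\dim\mathfrak{r}_\ell=\dim\mathfrak{g}-\dim\mathfrak{m}$) showing $\theta$ is an open local diffeomorphism, the open--closed--connected argument for surjectivity under the closedness hypothesis, and Chevalley--Rosenlicht for the nilpotent case. Your explicit Lagrangian dimension count and the affine cocycle identity $\theta(nm)=\Ad^*(n)\theta(m)+\theta(n)$ are just cleaner packagings of the equivariance the paper uses when computing the differential of the lift $\tilde{\theta}$ at an arbitrary point.
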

\begin{proof} Note that $\theta$ is well defined, because by the identity $\Ad^*(\exp X)= e^{\ad^* X}$ we have $m\cdot \ell (P)=\ell(P)$ for $P\in \mathfrak{m}$ and $m\in M$. Injectivity is clear since $ m_1\cdot\ell-\ell=m_2\cdot \ell-\ell$ gives $m_1^{-1}m_2\in R_\ell$ and hence $m_1R_\ell=m_2 R_\ell$. Surjectivity of $\theta$, on the other hand, is equivalent to surjectivity of its lift $\tilde{\theta}$ to the group $M$ defined by $m\mapsto m\cdot \ell -\ell$. Computing the differential of the equivariant map $\tilde{\theta}$ at a point $g\in M$ one infers that 
	\[
	\tilde{\theta}_{*, g}(X)=\frac{d}{dt}\Bigr|_{t=0}\tilde{\theta}(g\exp tX )=g\cdot((\ad^* X) \ell) \quad \text{ for } X\in T_gM,
	\]
and 
	\begin{align*}
	\text{rank } \tilde{\theta}_{*, g}&=\dim T_gM-\dim \{X\in T_gM\mid (\ad^* X) \ell=0\}\\
	&=\dim \mathfrak{m}-\dim \mathfrak{r}_\ell.
	\end{align*}
Consequently, $\tilde{\theta}$ is a submersion, indeed a local diffeomorphism, and hence an open map. Because $\Ad^*(M)\ell$ is closed in $\mathfrak{g}^*$ by assumption, we conclude that the image of $\tilde{\theta}$ is a closed submanifold of $(\mathfrak{g}/\mathfrak{m})^*$ and the surjectivity of $\theta$ follows. The last assertion of the theorem is a consequence of a result of Chevalley and Rosenlicht \cite[Theorem 3.1.4]{CG90} which states that if $G$ acts unipotently on a real vector space $V$, then the $G$-orbits are closed in $V$.
\end{proof}
Variations of the closedness assumption used in the statement of this lemma are known as the \emph{Pukanzsky condition} in the literature.
\begin{theorem}[Weil's formula]\label{T:QuotientIntegral}
Let $G$ be a locally compact group, and let $H$ be a closed subgroup. There exists a $G$-invariant Radon measure $\nu\ne 0$ on the quotient $G/H$ if and only if the modular functions $\Delta_G$ and $\Delta_H$ agree on $H$. In this case, the measure $\nu$ is unique up to a positive scalar. Given Haar measures on $G$ and $H$, there is a unique choice for $\nu$ such that for every $f\in C_c(G)$ one has the quotient integral formula
	\[
	\int_G f(g)\, dg=\int_{G/H}\int_H f(xh)\, dh\, d\nu(xH).
	\]
\end{theorem}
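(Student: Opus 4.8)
The plan is to reduce everything to the properties of the averaging (integration-over-the-fiber) map $P\colon C_c(G)\to C_c(G/H)$ defined by $Pf(xH)=\int_H f(xh)\, dh$, where $dh$ is a fixed left Haar measure on $H$. First I would verify that $P$ is well defined: $Pf$ is continuous and compactly supported, with support contained in the image of $\operatorname{supp} f$ under the projection $\pi\colon G\to G/H$. I would then check that $P$ is surjective and carries nonnegative functions onto nonnegative functions; surjectivity is the one genuinely technical preliminary and is handled by a cutoff/partition-of-unity argument, picking $g\in C_c(G)$ with $Pg>0$ on $\operatorname{supp}\varphi$ for a given $\varphi\in C_c(G/H)$ and forming $f=(\varphi\circ\pi)\,g/(Pg\circ\pi)$ on the relevant set.

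Next I would record the two equivariance identities that drive the argument. For left translation $L_{g_0}f(g)=f(g_0^{-1}g)$ one has $P(L_{g_0}f)=L_{g_0}(Pf)$, which reflects the desired $G$-invariance of $\nu$; for right translation $R_{h_0}f(g)=f(gh_0)$ with $h_0\in H$, the transformation law of the Haar measure on $H$ gives $P(R_{h_0}f)=\Delta_H(h_0)^{-1}Pf$.

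The crux, and the step I expect to be the main obstacle, is the kernel lemma: if $\Delta_G|_H=\Delta_H$, then $Pf=0$ implies $\int_G f\, dg=0$. I would prove this by testing against an arbitrary $g\in C_c(G)$. The vanishing $\int_G g(x)\,Pf(xH)\, dx=0$ is unfolded by Fubini together with a right-translation change of variables into the symmetric identity $\int_G f(y)\,Pg(yH)\, dy=0$, and the hypothesis $\Delta_G|_H=\Delta_H$ is precisely what makes the two modular factors produced along the way cancel. Choosing $g$ with $Pg\equiv 1$ on the compact set $\pi(\operatorname{supp} f)$, which is possible by the surjectivity of $P$, then yields $\int_G f\, dg=0$.

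With the lemma in hand, the existence direction is immediate: the functional $I(Pf)=\int_G f\, dg$ is well defined on $C_c(G/H)$ by the lemma, positive by the positivity properties of $P$, so the Riesz representation theorem produces a Radon measure $\nu$ satisfying the quotient integral formula, and $G$-invariance of $\nu$ follows from $P(L_{g_0}f)=L_{g_0}(Pf)$ together with the left-invariance of $\int_G$. For the converse and the uniqueness assertion I would run the argument in reverse: given a nonzero $G$-invariant $\nu$, the functional $f\mapsto\int_{G/H}Pf\, d\nu$ is left-invariant on $C_c(G)$, hence a positive scalar multiple of Haar measure by the uniqueness of Haar measure; this both pins $\nu$ down up to a positive scalar and, upon substituting $R_{h_0}f$ and comparing the factors $\Delta_H(h_0)^{-1}$ and $\Delta_G(h_0)^{-1}$ arising on the two sides of the resulting quotient formula, forces $\Delta_G|_H=\Delta_H$.
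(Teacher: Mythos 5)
Your proposal is correct, but there is no proof in the paper to compare it against: the paper states Weil's formula (Theorem~\ref{T:QuotientIntegral}) as standard background material and never proves it, using it only as input to Corollary~\ref{C:QuotientIntegral} and to Theorem~\ref{T:Lipsman-Pukanzsky}. What you have written is the standard textbook argument (it is essentially the proof in Folland's book \cite{F95}, Section 2.6, which is the reference the paper leans on), and all the load-bearing steps are in place: the averaging map $P$, its surjectivity via the cutoff construction, the two equivariance identities $P(L_{g_0}f)=L_{g_0}(Pf)$ and $P(R_{h_0}f)=\Delta_H(h_0)^{-1}Pf$, and above all the kernel lemma, where the factor $\Delta_G(h)^{-1}$ coming from the right-translation change of variables in $G$ and the factor $\Delta_H(h)^{-1}$ coming from the inversion $h\mapsto h^{-1}$ in $H$ cancel exactly under the hypothesis $\Delta_G|_H=\Delta_H$, giving the symmetric identity $\int_G g(x)\,Pf(xH)\,dx=\int_G f(x)\,Pg(xH)\,dx$. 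Your converse/uniqueness argument via uniqueness of Haar measure is also sound; the only point worth making explicit is that pinning down the measure $\nu$ itself, rather than just the functional $f\mapsto\int_{G/H}Pf\,d\nu$, requires surjectivity of $P$ once more. Finally, note that the cutoff lemma you flag as the one genuinely technical preliminary is precisely \cite[Lemma 2.47]{F95}, which the paper itself invokes when deducing Corollary~\ref{C:QuotientIntegral} from this theorem, so your proof fits seamlessly with how the result is used downstream.
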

To lighten the notation we shall write $dxH$ for $d\nu(xH)$. 

The following theorem is proved by Lipsman~\cite{Lips}. Since we are mainly concerned with establishing positivity results, we shall not go into any discussion of the normalizations of measures  and instead refer the reader to \cite{EK15} for the details.
\begin{theorem}\label{T:Lipsman-Pukanzsky} Let $G$ be a connected nilpotent Lie group, $\mathfrak{m}$ a polarizing subalgebra subordinate to some $\ell\in \mathfrak{g}^*$, and $M=\exp \mathfrak{m}$. Then for any $f\in \mathcal{S}(G)$,
	\begin{equation}\label{E:L-P Quotient Integral}
	\chi_\mathcal{O}(f)=\int_{G/M}\int_M f(xpx^{-1})e^{i\ell( \log p)}\,dp\,dxM,
	\end{equation}
where $\chi_\mathcal{O}$ is the Kirillov character for the coadjoint orbit $\mathcal{O}$ through $\ell$.
\end{theorem}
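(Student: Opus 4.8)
The plan is to convert the orbit integral defining $\chi_\mathcal{O}(f)$ into the group integral on the right by decomposing the coadjoint orbit along the chain $R_\ell \subset M \subset G$ and then collapsing one of the resulting integrations by Fourier inversion. Throughout I would suppress the normalization constants of the various measures, as the statement permits (deferring them to \cite{EK15}).

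First I would record that, since $G$ is nilpotent and hence unimodular, so are its closed connected subgroups $M$ and $R_\ell$; thus all the modular-function hypotheses of Corollary~\ref{C:QuotientIntegral} hold automatically. The Kirillov--Kostant--Souriau symplectic form is $G$-invariant, so the Liouville measure $\mu_\mathcal{O}$ is a $G$-invariant measure on $\mathcal{O}\cong G/R_\ell$, and up to a positive scalar it coincides with the invariant measure supplied by Theorem~\ref{T:QuotientIntegral}. Writing the integrand as a function on $G/R_\ell$, Corollary~\ref{C:QuotientIntegral} applied to the tower $R_\ell\subset M\subset G$ gives
\[
\chi_\mathcal{O}(f)=\int_{G/M}\int_{M/R_\ell}\widehat{f\circ\exp}\big(x\cdot(m\cdot\ell)\big)\,dmR_\ell\,dxM .
\]

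Next I would invoke Lemma~\ref{L:Orbit Diffeomorphism}: the map $mR_\ell\mapsto m\cdot\ell-\ell$ is a diffeomorphism of $M/R_\ell$ onto $(\mathfrak{g}/\mathfrak{m})^*$, so the inner integral becomes an integral over $\mu\in(\mathfrak{g}/\mathfrak{m})^*$ (the annihilator of $\mathfrak{m}$) with $m\cdot\ell$ replaced by $\ell+\mu$. Substituting $\widehat{f\circ\exp}(\psi)=\int_\mathfrak{g} f(\exp Y)e^{i\langle\psi,Y\rangle}\,dY$, rewriting $\langle x\cdot(\ell+\mu),Y\rangle=\langle\ell+\mu,\Ad(x^{-1})Y\rangle$, and changing variables $Y\mapsto\Ad(x)Y$ (which preserves Lebesgue measure by unimodularity) turns the integrand into $f(x\exp(Y)x^{-1})\,e^{i\ell(Y)}e^{i\mu(Y)}$. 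After a Fubini interchange the only $\mu$-dependence is the factor $e^{i\mu(Y)}$, and integrating it over $(\mathfrak{g}/\mathfrak{m})^*$ produces, by Fourier inversion, a Dirac mass forcing the component of $Y$ transverse to $\mathfrak{m}$ to vanish. The $\mathfrak{g}$-integral thereby collapses onto $\mathfrak{m}$, yielding
\[
\chi_\mathcal{O}(f)=\int_{G/M}\int_\mathfrak{m} f\big(x\exp(P)x^{-1}\big)\,e^{i\ell(P)}\,dP\,dxM ,
\]
which is \eqref{E:L-P Quotient Integral} after the substitution $p=\exp P$.

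The step I expect to be the main obstacle is bookkeeping the normalizations so that all constants generated along the way---the scalar relating $\mu_\mathcal{O}$ to the invariant measure on $G/R_\ell$, the Jacobian of the diffeomorphism $\theta$ of Lemma~\ref{L:Orbit Diffeomorphism}, and the $(2\pi)$-powers from Fourier inversion---combine to exactly $1$; this is precisely the content deferred to \cite{EK15}. A secondary technical point is that Corollary~\ref{C:QuotientIntegral} is stated for $C_c$ functions whereas the integrand is only Schwartz along the orbit, so one must extend the quotient integral formula by a density and dominated-convergence argument and likewise justify the Fubini interchange and the Fourier inversion using $f\circ\exp\in\mathcal{S}(\mathfrak{g})$. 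The polarization hypothesis is what makes everything fit: it is exactly the halfway dimension count $\dim\mathfrak{m}-\dim\mathfrak{r}_\ell=\dim\mathfrak{g}-\dim\mathfrak{m}$ that renders $\theta$ a diffeomorphism onto all of $(\mathfrak{g}/\mathfrak{m})^*$ and that lets the transverse Fourier integral collapse the $\mathfrak{g}$-integral cleanly onto $\mathfrak{m}$.
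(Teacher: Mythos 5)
Your argument is sound, but note a peculiarity of the comparison: the paper never proves Theorem~\ref{T:Lipsman-Pukanzsky} itself --- it quotes the result from Lipsman~\cite{Lips} and defers all measure normalizations to \cite{EK15}. The honest benchmark inside the paper is therefore the proof of the $\SL(2,\mathbb{R})$ analogue, Theorem~\ref{T:simplified-character-sl2}, and your route is essentially that proof specialized to the nilpotent case: identify $\mathcal{O}\cong G/R_\ell$, decompose along the tower $R_\ell\subset M\subset G$ (the paper must use the quasi-invariant version, Corollary~\ref{Co:Quotient integral formula for quasi-invariant measures}, since there $M$ is not unimodular; you may use Corollary~\ref{C:QuotientIntegral} because $G$, $M$, $R_\ell$ are all unimodular), pass from $M/R_\ell$ to $(\mathfrak{g}/\mathfrak{m})^*$ via Lemma~\ref{L:Orbit Diffeomorphism}, kill the transverse variable by Fourier inversion, and pull the $\mathfrak{m}$-integral back to $M$. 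What the nilpotent hypothesis silently buys you --- and what the paper must track explicitly in Section~\ref{S: SL2} --- is that all modular functions and $\rho$-functions are $\equiv 1$ and that $j_{\mathfrak{g}}\equiv j_{\mathfrak{m}}\equiv 1$ (since $\ad$ is nilpotent), so no factor $\Delta_M^{-1/2}(p)/\rho_{(G,M)}(x)$ appears and your final substitution $p=\exp P$ carries Lebesgue measure on $\mathfrak{m}$ to Haar measure on $M$ with trivial Jacobian; in other words, the analogue of Equation~\eqref{E:Relation between j functions} holds trivially. One point you should make explicit rather than file under ``bookkeeping'': you need the diffeomorphism $\theta$ of Lemma~\ref{L:Orbit Diffeomorphism} to push the invariant measure on $M/R_\ell$ to a \emph{constant} multiple of Lebesgue measure on $(\mathfrak{g}/\mathfrak{m})^*$, or else your Fourier inversion step is not legitimate. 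This is true here, but for a reason worth stating: $\theta$ intertwines the $M$-action on $M/R_\ell$ with an affine action on $(\mathfrak{g}/\mathfrak{m})^*$ whose linear part $\Ad^*(m)|_{\mathfrak{m}^\perp}$ is unipotent, hence volume-preserving; since this affine action is transitive, uniqueness of invariant measures forces proportionality to Lebesgue measure. With that sentence added, and the $C_c$-to-Schwartz extension you already flag, your proof is exactly the standard Fourier-analytic argument of Lipsman and of the paper's Section~\ref{S: SL2}.
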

If $G$ is a connected, simply connected nilpotent Lie group, Kirillov's character is known to be positive for any coadjoint orbit of $G$ in view of Theorems~\ref{T:trace class-nilpotent} and \ref{T:CharacterFormula}, and Equation~\eqref{E:positivity of trace}. One may prove this positivity result directly and without giving any reference to the underlying representation by applying Theorem~\ref{T:Lipsman-Pukanzsky}. See \cite{EK15} for this approach. In the next section we develop a more general framework to treat the positivity statements uniformly, and we obtain our desired result for nilpotent Lie groups in Corollary~\ref{C:positivity-of-nilpotents-and-sl2}.
\section{$\SL(2,\mathbb{R})$ and Beyond}\label{S: SL2}
Rossmann has shown that Kirillov's formula is valid for characters of irreducible tempered representations of semisimple Lie groups. (Characters of non-tempered irreducible representations, on the other hand, usually do not arise as Fourier transforms of invariant measures on coadjoint orbits.)

In this section we prove the positivity of Kirillov's character for coadjoint orbits of unimodular Lie groups such as $\SL(2, \mathbb{R})$ under a few conditions that are automatically satisfied in the nilpotent case. One hypothesis is the geometric condition required by Lemma~\ref{L:Orbit Diffeomorphism}; the other is the existence of real polarizations satisfying Equation~\eqref{E:Relation between j-functions-hyperbolic} that we shall first encounter while studying $\SL(2, \mathbb{R})$.
	
To carry out our computations, we need to be able to decompose the Haar measure of a locally compact group $G$ over a closed subgroup $H$ and the quotient $G/H$ when $G/H$ does not necessarily admit a $G$-invariant measure. 
\begin{definition}\label{D:Quasi-invariance} Let $G$ be a locally compact group and $H$ a closed subgroup of $G$. A Radon measure $\mu$ on $G/H$ is called \emph{quasi-invariant} under $G$ if there exist functions $\lambda_g$ defined on $G/H$ such that for all $g\in G$ and $f\in C_c(G/H)$
	\[
	\int_{G/H} f(\Lambda_g x)\, d\mu(xH)=\int_{G/H}f(x)\lambda_g(x)\, d\mu(xH),
	\]
where $\Lambda_g(xH)=g^{-1}xH$. 
\end{definition}
Note that if $\lambda_g=1$ for all $g\in G$, then $\mu$ is invariant under $G$, and hence quasi-invariance extends the notion of invariance.

The next theorem generalizes Weil's formula, Theorem~\ref{T:QuotientIntegral}.
\begin{theorem}[Mackey--Bruhat]\label{T:RhoFunction}
Let $G$ be a locally compact group. Given a closed subgroup $H$ of $G$, there is always a continuous, strictly positive solution $\rho$ of the functional equation
	\begin{equation}\label{E:Functional equation}
	\rho(xh)
	=
	\rho(x)\frac{\Delta_H(h)}{\Delta_G(h)},
	\quad x\in G, h\in H.
	\end{equation}
Moreover, there is a quasi-invariant measure $d_\rho xH$ on $G/H$ such that 
	\[
	\int_G f(g)\rho(g)\, dg=\int_{G/H}\int_H f(xh)\, dh\, d_\rho xH.
	\]
\end{theorem}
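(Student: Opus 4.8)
The plan is to construct the $\rho$-function first and then build the quasi-invariant measure from it via a quotient-integral argument modeled on the proof of Weil's formula. For the first part, I would observe that the functional equation~\eqref{E:Functional equation} demands that $\rho$ transform under right translation by $H$ according to the ratio $\Delta_H/\Delta_G$ restricted to $H$. The standard trick is to produce $\rho$ by averaging: starting from any $\psi\in C_c(G)$ that is strictly positive on a set meeting every coset (which exists by local compactness and $\sigma$-compactness considerations, or by a partition-of-unity argument), I would set
	\[
	\rho(x)=\int_H \psi(xh)\frac{\Delta_G(h)}{\Delta_H(h)}\,dh.
	\]
A direct change of variable $h\mapsto h_0 h$ inside the integral, using left-invariance of Haar measure on $H$ together with the multiplicativity of the modular functions, shows that $\rho(xh_0)=\rho(x)\,\Delta_H(h_0)/\Delta_G(h_0)$, which is exactly~\eqref{E:Functional equation}. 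Continuity follows from continuity of $\psi$ and dominated convergence, and strict positivity from the positivity of $\psi$ on each coset; local finiteness of the integral uses that $\psi$ has compact support and that $H$ is closed.

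Having fixed such a $\rho$, the second part is to define the measure. I would define a linear functional on $C_c(G/H)$ by using $\rho$ to lift functions back to $G$: given $f\in C_c(G/H)$, Weil's formula applied to the \emph{unimodular-corrected} integrand $g\mapsto f(gH)\rho(g)$ does not directly apply since $G/H$ need not carry an invariant measure, so instead I would run the averaging argument in reverse. Concretely, for $F\in C_c(G)$ the assignment
	\[
	F\longmapsto \int_G F(g)\rho(g)\,dg
	\]
descends to a well-defined, positive functional on $C_c(G/H)$ once one checks that it depends only on the $H$-average $P F(xH)=\int_H F(xh)\,dh$; the functional-equation property of $\rho$ is precisely what makes the $\rho$-weighted integral factor through $P$. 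The surjectivity of $P\colon C_c(G)\to C_c(G/H)$ (again via the auxiliary cutoff function $\alpha$ with $\int_H\alpha(xh)\,dh=1$, exactly as in the proof of Corollary~\ref{C:QuotientIntegral}) then defines $d_\rho xH$ by the Riesz representation theorem, and the desired identity
	\[
	\int_G f(g)\rho(g)\,dg=\int_{G/H}\int_H f(xh)\,dh\,d_\rho xH
	\]
holds by construction.

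The last thing to verify is quasi-invariance. For this I would compute, for $g_0\in G$ and $f\in C_c(G/H)$, how $\int_{G/H} f(\Lambda_{g_0}xH)\,d_\rho xH$ compares to $\int_{G/H} f(xH)\,d_\rho xH$ by pulling both back to $G$ through the defining identity and applying left-invariance of Haar measure on $G$. The translate introduces the factor $\rho(g_0^{-1}g)/\rho(g)$, and one checks using~\eqref{E:Functional equation} that this ratio is constant on $H$-cosets, hence descends to a well-defined function $\lambda_{g_0}$ on $G/H$; this yields the quasi-invariance relation of Definition~\ref{D:Quasi-invariance}. The main obstacle, and the step deserving the most care, is establishing that the $\rho$-weighted functional genuinely factors through the averaging map $P$ — that is, that $\int_G F\rho\,dg=0$ whenever $P F=0$ — since this well-definedness is what legitimizes the whole construction; everything after it is a bookkeeping exercise with modular functions and left-invariance.
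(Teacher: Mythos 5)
First, a point of comparison: the paper does not prove Theorem~\ref{T:RhoFunction} at all --- it quotes it as the classical Mackey--Bruhat theorem and moves on (the surrounding remark only cites the literature for the formula $\lambda_g(xH)=\rho(gx)/\rho(x)$) --- so your proposal must stand on its own. Its architecture is the standard textbook one (average against $\Delta_G/\Delta_H$ over $H$ to build $\rho$, define the measure by forcing the lifted identity, check factorization through the averaging map $P$, deduce quasi-invariance), and that is the right plan. But the construction of $\rho$ fails as literally written. No $\psi\in C_c(G)$ can be ``strictly positive on a set meeting every coset'' unless $G/H$ is compact: the positivity set of $\psi$ lies in its compact support $K$, and $K$ meets every coset precisely when $KH=G$, i.e.\ when $G/H$ is compact. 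Worse, for any $\psi\in C_c(G)$ the resulting $\rho(x)=\int_H\psi(xh)\,\Delta_G(h)\Delta_H(h)^{-1}\,dh$ vanishes identically for $x\notin KH$, so strict positivity is violated whenever $G/H$ is noncompact. The correct ingredient --- which your parenthetical ``partition-of-unity argument'' gestures at but does not supply --- is a \emph{Bruhat function}: a continuous $\psi\ge 0$, in general \emph{not} compactly supported, such that $\operatorname{supp}(\psi)\cap KH$ is compact for every compact $K\subset G$ (this is what makes the integral finite and $\rho$ continuous) and $\int_H\psi(xh)\,dh>0$ for every $x$. Producing such a $\psi$ (easy in the $\sigma$-compact case via a locally finite partition of unity, and requiring Bruhat's argument in general) is the real technical content of the first assertion, and it is missing.

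Second, the step you yourself flag as ``the main obstacle'' --- that $\int_G F\rho\,dg=0$ whenever $PF=0$, where $PF(xH)=\int_H F(xh)\,dh$ --- is asserted rather than proved, and it is not a bookkeeping exercise: it is the only place where the precise form of \eqref{E:Functional equation} enters. The verification runs as follows: insert $\int_H\alpha(gh)\,dh=1$ on $\operatorname{supp}F$, apply Fubini, substitute $g\mapsto gh^{-1}$ in the inner integral (cost: a factor $\Delta_G(h)^{-1}$), apply $\rho(gh^{-1})=\rho(g)\,\Delta_G(h)/\Delta_H(h)$, and then use $\int_H\varphi(h^{-1})\Delta_H(h^{-1})\,dh=\int_H\varphi(h)\,dh$; the $\Delta_G$ factors cancel and the surviving $\Delta_H(h)^{-1}$ is exactly what reassembles $PF$, giving $\int_G F\rho\,dg=\int_G\rho\,\alpha\,(PF)\,dg=0$. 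Without this computation the second assertion of the theorem is not established. Your final step, quasi-invariance, is fine: \eqref{E:Functional equation} shows $\rho(g_0x)/\rho(x)$ is constant on cosets, hence descends to $G/H$, and continuity of $\rho$ makes the resulting $\lambda_{g_0}$ continuous.
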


\begin{remark} Henceforth we will assume that our quotient spaces are equipped with measures as in Theorem~\ref{T:RhoFunction} and we will drop the index $\rho$ in the measure. One can show that in this situation
	\[
	\lambda_g(xH)=\frac{\rho(gx)}{\rho(x)}
	\quad \text{for}\quad x, g\in G.
	\]
See Definition~\ref{D:Quasi-invariance} and, for instance, \cite[Proposition 8.1.4]{RS00}. If $G/H$ carries a $G$-invariant measure, then we shall assume that $\rho\equiv 1$; this happens, for instance, when we study quotients diffeomorphic to coadjoint orbits which naturally carry invariant symplectic measures.
\end{remark}
Now we record a corollary to Theorem~\ref{T:RhoFunction}.
\begin{corollary}\label{Co:Quotient integral formula for quasi-invariant measures}
Let $G$ be a locally compact group with closed subgroups $H$ and $K$ such that $K\subset H\subset G$. Then there exist suitably normalized quasi-invariant measures on the quotient spaces such that the equality
	\[
	\int_{G/K} f(g)\,dgK=
	\int_{G/H}\int_{H/K}f(xh)\frac{\rho_{(G, K)}(xh)}{\rho_{(G, H)}(xh)\rho_{(H, K)}(h)}\, dhK\, dxH
	\]
holds for any $f\in C_c(G/K)$.	
\end{corollary}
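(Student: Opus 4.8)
The plan is to imitate the proof of Corollary~\ref{C:QuotientIntegral} almost verbatim, replacing each appeal to Weil's formula (Theorem~\ref{T:QuotientIntegral}) by an appeal to the Mackey--Bruhat theorem (Theorem~\ref{T:RhoFunction}) while carefully bookkeeping the three weight functions. First I would fix continuous, strictly positive solutions $\rho_{(G,K)}$, $\rho_{(G,H)}$, and $\rho_{(H,K)}$ of the functional equation~\eqref{E:Functional equation} for the pairs $(G,K)$, $(G,H)$, and $(H,K)$, and equip $G/K$, $G/H$, and $H/K$ with the associated quasi-invariant measures; these are the ``suitably normalized'' measures asserted to exist. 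I would then start from an arbitrary $F\in C_c(G)$ and run the tower from the bottom up.

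Applying Theorem~\ref{T:RhoFunction} to $(G,K)$ gives $\int_G F\rho_{(G,K)}\,dg=\int_{G/K}\int_K F(gk)\,dk\,dgK$. Rewriting $F\rho_{(G,K)}=\bigl(F\,\rho_{(G,K)}/\rho_{(G,H)}\bigr)\rho_{(G,H)}$ and applying Theorem~\ref{T:RhoFunction} to $(G,H)$ converts the left-hand side into $\int_{G/H}\int_H F(xh)\,\frac{\rho_{(G,K)}(xh)}{\rho_{(G,H)}(xh)}\,dh\,dxH$. Finally, for each fixed $x$ I would view the inner integrand as a function of $h\in H$, factor out $\rho_{(H,K)}$, and apply Theorem~\ref{T:RhoFunction} to $(H,K)$. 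This produces the triple integral $\int_{G/H}\int_{H/K}\int_K F(xhk)\,\Omega(x,h,k)\,dk\,dhK\,dxH$ with weight $\Omega(x,h,k)=\rho_{(G,K)}(xhk)/\bigl(\rho_{(G,H)}(xhk)\,\rho_{(H,K)}(hk)\bigr)$, equal to the left-hand $\int_{G/K}\int_K$ expression.

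The crux, and the step I expect to demand the most care, is to show that $\Omega(x,h,k)$ is independent of $k\in K$. Using~\eqref{E:Functional equation} for each pair together with the multiplicativity of the modular functions, one computes $\rho_{(G,K)}(xhk)=\rho_{(G,K)}(xh)\,\Delta_K(k)/\Delta_G(k)$, $\rho_{(G,H)}(xhk)=\rho_{(G,H)}(xh)\,\Delta_H(k)/\Delta_G(k)$, and $\rho_{(H,K)}(hk)=\rho_{(H,K)}(h)\,\Delta_K(k)/\Delta_H(k)$. Substituting these, the contributions of $\Delta_G(k)$, $\Delta_H(k)$, and $\Delta_K(k)$ cancel completely, leaving $\Omega(x,h,k)=\rho_{(G,K)}(xh)/\bigl(\rho_{(G,H)}(xh)\,\rho_{(H,K)}(h)\bigr)$, independent of $k$. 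This cancellation is exactly what both guarantees that the integrand descends to $H/K$ (so the formula is well defined) and permits pulling the weight out of the innermost $\int_K$.

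It then remains to pass from $F\in C_c(G)$ to an arbitrary $f\in C_c(G/K)$ by the same device used in Corollary~\ref{C:QuotientIntegral}: choose $\alpha\in C_c(G)$ with $\int_K\alpha(gk)\,dk=1$ for every $gK\in\operatorname{supp}(f)$ (see \cite[Lemma 2.47]{F95}) and substitute $F(g)=f(gK)\alpha(g)$. Since $f$ is constant on $K$-cosets, the innermost integrals $\int_K F(\,\cdot\,k)\,dk$ collapse to $f$ on both sides, and the triple integral reduces to $\int_{G/H}\int_{H/K} f(xhK)\,\rho_{(G,K)}(xh)/\bigl(\rho_{(G,H)}(xh)\rho_{(H,K)}(h)\bigr)\,dhK\,dxH$, which is precisely the claimed identity.
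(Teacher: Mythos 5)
Your proposal is correct and follows essentially the same route as the paper's proof: iterated application of Theorem~\ref{T:RhoFunction} to the pairs $(G,K)$, $(G,H)$, $(H,K)$, cancellation of the modular-function factors via the functional equation~\eqref{E:Functional equation} to show the weight descends to $H/K$, and the $\alpha$-trick from \cite[Lemma 2.47]{F95} to pass from $C_c(G)$ to $C_c(G/K)$. The only difference is cosmetic bookkeeping—you track $\int_G F\rho_{(G,K)}$ and substitute $F=f\alpha$, while the paper tracks $\int_G F$ and substitutes $F=f\alpha\rho_{(G,K)}$—which amounts to the same computation.
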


\begin{proof}
Let $F\in C_c(G)$ and apply Theorem~\ref{T:RhoFunction} twice to obtain
	\begin{align*}
	\int_G F(t)\, dt&=
	\int_{G/K}\int_K \frac{F(gk)}{\rho_{(G, K)}(gk)} \, dk\, dgK\\
	&=\int_{G/H}\int_H \frac{F(xs)}{\rho_{(G, H)}(xs)} \, ds\, dxH\\
	&=\int_{G/H}\left(\int_{H/K}\int_K \frac{F(xhk)}{\rho_{(G, H)}(xhk)\rho_{(H, K)}(hk)}\, dk\, dhK\right)dxH.
	\end{align*}
Fix $f\in C_c(G/K)$ and choose a function $\alpha\in C_c(G)$ with the property that for every $gK\in \text{supp } (f)$ we have  $\int_K \alpha (gk)\, dk =1$, then substitute $F=f\alpha\rho_{(G, K)}$. For the standard proof of existence of such $\alpha$ we refer to~\cite[Lemma 2.47]{F95}. The above computation implies
	\begin{align*}
	\int_{G/K} 
		&
		f(g)
	\, dgK
	=
	\int_{G/H}
	\int_{H/K}
	\int_K 
		\frac{f(xhk)\alpha(xhk)\rho_{(G, K)}(xhk)}	{\rho_{(G, H)}(xhk)\rho_{(H, K)}(hk)}
		\, dk\, dhK\,dxH
	\\
	&=
	\int_{G/H}\int_{H/K}
		f(xh)
		\int_K 
			\frac{
				\alpha(xhk)\rho_{(G, K)}(xh)
				\frac{\Delta_K(k)}{\Delta_G(k)}
			}
			{
			\rho_{(G, H)}(xh)
			\frac{\Delta_H(k)}{\Delta_G(k)}
			\rho_{(H, K)}(h)
			\frac{\Delta_K(k)}{\Delta_H(k)}
			}
	\, dk\, dhK\,dxH
	\\
	&=
	\int_{G/H}\int_{H/K}
		f(xh)
		\frac{
		\rho_{(G, K)}(xh)}
		{\rho_{(G, H)}(xh)\rho_{(H, K)}(h)
		}
	\, dhK\, dxH.
	\end{align*}
\end{proof}
The next lemma will be used in conjunction with Corollary~\ref{Co:Quotient integral formula for quasi-invariant measures}.
\begin{lemma}\label{L:rho-products}\hfill
\begin{enumerate}[(i)]
\item Let $G$ be a locally compact group, and let $H$ be a unimodular subgroup of $G$. Then the function $\rho_{(G, H)}\colon G\to (0, \infty)$ defined by $\rho_{(G, H)}=\Delta_G^{-1}$ is a solution to the functional equation~\eqref{E:Functional equation}. \label{L:unimodular-subgroup-rho}
\item Let $G$ be a unimodular Lie group with Lie algebra $\mathfrak{g}$. Suppose $\ell\in \mathfrak{g}^*$, and let $M$ and $R_\ell$ denote a polarizing subgroup and the stabilizer subgroup associated with $\ell$, respectively. Then one can let $\rho_{(M, R_\ell)}=\Delta^{-1}_M$ in which case any $\rho$-function $\rho_{(G, M)}$ satisfies the identity
	\[
	\rho_{(G, M)}(xm)\rho_{(M, R_\ell)}(m)=\rho_{(G, M)}(x)
	\]
for all $x\in G$ and $m\in M$.
\end{enumerate}
\end{lemma}

\begin{proof}
The first part is a consequence of the fact that the modular function is a group homomorphism. As for the second part, note that since $G$ is unimodular and $\mathcal{O}\cong G/R_\ell$ carries a $G$-invariant measure, the stabilizer $R_\ell$ is also unimodular by Theorem~\ref{T:QuotientIntegral}. Thus, Part~\eqref{L:unimodular-subgroup-rho} implies that $\rho_{(M, R_\ell)}\colon M\to (0, \infty)$ given by $\rho_{(M, R_\ell)}=\Delta^{-1}_M$ is a solution to the functional equation~\eqref{E:Functional equation}. But for any $\rho$-function $\rho_{(G, M)}\colon G\to (0, \infty)$ satisfying \eqref{E:Functional equation} we have 
\begin{equation}\label{E:Functional Equation (G, M)}
\begin{aligned}
\rho_{(G, M)}(xm)&=\rho_{(G, M)}(x)\frac{\Delta_M(m)}{\Delta_G(x)}\\
&=\rho_{(G, M)}(x)\Delta_M(m),\text{ for } x\in G, \text{ and } 
	m
	\in M.
\end{aligned}
\end{equation}
This combined with our choice of $\rho_{(M, R_\ell)}=\Delta^{-1}_M$ finishes the proof of the second part.
\end{proof}

For a Lie group $G$ with Lie algebra $\mathfrak{g}$ write $j_{\mathfrak{g}}$ for the Jacobian of the exponential map $\exp\colon \mathfrak{g}\to G$, with reference to the Lebesgue measure on $\mathfrak{g}$ and the left Haar measure on $G$. According to a classical result of F.~Schur, 
	\[
	j_{\mathfrak{g}}(X)
	=
	\det\left(
	\frac{\id-e^{-\ad X}}{\ad X}
	\right),\quad 	
	X\in \mathfrak{g}.
	\]
In the next example we compute the $j$-function for $\mathfrak{g}=\mathfrak{sl}(2, \mathbb{R})$ and one of its subalgebras and point out a relation between the two.

\begin{example}\label{Ex:comparison of j-functions} To streamline the notation, we write
	\begin{equation}\label{E:A basis for sl(2, R)}
	H=
	\begin{bmatrix}
	1&0\\
	0&-1
	\end{bmatrix},
	X=
	\begin{bmatrix}
	0&1\\
	1&0
	\end{bmatrix}, \text{ and }
	Y=
	\begin{bmatrix}
	0&1\\
	-1&0
	\end{bmatrix}
	\end{equation}
	for a basis of the three dimensional Lie algebra $\mathfrak{g}=\mathfrak{sl}(2, \mathbb{R})$ of $2\times 2$ traceless matrices. Consider the polarizing subalgebra
	\[
	\mathfrak{m}=\operatorname{Span}\{X+Y, H\}
	\]
subordinate to $\ell=H^*$ consisting of upper triangular matrices in $\mathfrak{g}$. 
For $W=a H+ b(X+Y)$, the matrix of $\ad_{\mathfrak{m}} W\colon \mathfrak{m}\to \mathfrak{m}$ 
with respect to the basis 
$\mathfrak{B}'=\{ X+Y, H\}$
of $\mathfrak{m}$ is given by
	\[
	[\ad_{\mathfrak{m}} W]_{\mathfrak{B}'}
	=
	\begin{bmatrix*}[r]
	2a&-2b\\0&0
	\end{bmatrix*}.
	\]
Therefore,
	\begin{equation}\label{E:j-sub-m}
	j_{\mathfrak{m}}(W)
	=
	\det\left(
	\frac{\id-e^{-\ad_{\mathfrak{m}} W}}{\ad_{\mathfrak{m}} W}
	\right)
	=
	\frac{1-e^{-2a}}{2a}
	\end{equation}
by the spectral mapping theorem applied to the eigenvalues of $\ad_{\mathfrak{m}} W$, namely $2a$ and $0$. Extend the basis $\mathfrak{B}'$ of $\mathfrak{m}$ to the basis 
$\mathfrak{B}=\{X+Y, H, X\}$
of $\mathfrak{g}$. Then the matrix of $\ad_{\mathfrak{g}} W\colon \mathfrak{g}\to \mathfrak{g}$ 
with respect to the basis $\mathfrak{B}$ is
	\[
	[
	\ad_{\mathfrak{g}} W
	]_{\mathfrak{B}'}
	=
	\begin{bmatrix*}[r]
	2a&-2b&2a\\
	0&0&2b\\
	0&0&-2a
	\end{bmatrix*}.
	\]
Therefore,
	\begin{equation}\label{E:j-sub-g}
	j_{\mathfrak{g}}(W)
	=
	\det\left(
	\frac{\id-e^{-\ad_{\mathfrak{g}} W}}{\ad_{\mathfrak{g}} W}
	\right)
	=
	\left(
	\frac{1-e^{-2a}}{2a}
	\right)
	\left(
	\frac{1-e^{2a}}{-2a}
	\right)
	=
	\frac{(e^a-e^{-a})^2}{4a^2}
	\end{equation}
by the spectral mapping theorem applied to the eigenvalues of $\ad_{\mathfrak{g}} W$, namely $2a$, $0$, and $-2a$. Equations~\eqref{E:j-sub-m}, and \eqref{E:j-sub-g} reveal an interesting relation between $j_{\mathfrak{m}}(W)$ and $j_{\mathfrak{g}}(W)$, namely
	\begin{equation}\label{E:Relation between j-functions}
	j_{\mathfrak{g}}(W)
	=
	j_{\mathfrak{m}}^2(W)/
	\Delta_M(\exp W).
	\end{equation}
Using the fact that 
$\Delta_M(\exp W)=\det \Ad_M(\exp -W)=
\det e^{-\ad_{\mathfrak{m}} W}$ we obtain
	\begin{align*}
	j_{\mathfrak{m}}^2(W)/
	\Delta_M(\exp W)
	&=
	\det\left(
	\frac{\id-e^{-\ad_{\mathfrak{m}} W}}{\ad_{\mathfrak{m}} W}
	\right)^2
	\det e^{\ad_{\mathfrak{m}} W}
	\\
	&=
	\det
	\left(
	\frac{\sinh(\ad_{\mathfrak{m}} W/2)}
		{\ad_{\mathfrak{m}} W/2}
	\right)^2.
	\end{align*}
The left side of \eqref{E:Relation between j-functions} can be written in a similar fashion in terms of hyperbolic functions as
	\[
	j_{\mathfrak{g}}(W)=\det\left(\frac{\id-e^{-\ad_{\mathfrak{g}} W}}{\ad_{\mathfrak{g}} W}\right)
	= \det\left(e^{-\ad_{\mathfrak{g}} W/2}\right)\det\left(\frac{\sinh(\ad_{\mathfrak{g}} W/2)}
		{\ad_{\mathfrak{g}} W/2}\right).	
	\]
Since $\SL(2, \mathbb{R})$ is unimodular, $\det e^{-\ad_{\mathfrak{g}} W/2}= 1$, and, therefore, we get the following neat reformulation of \eqref{E:Relation between j-functions}:
	\begin{equation}\label{E:Relation between j-functions-hyperbolic}
	\det
	\left(
	\frac{\sinh(\ad_{\mathfrak{g}} W/2)}
		{\ad_{\mathfrak{g}} W/2}
	\right)
	=
	\det
	\left(
	\frac{\sinh(\ad_{\mathfrak{m}} W/2)}
		{\ad_{\mathfrak{m}} W/2}
	\right)^2,
	\quad W\in \mathfrak{m}.
	\end{equation}
This condition seems to be worth studying in light of its critical role in the proof of Theorem~\ref{T:simplified-character-sl2} and consequently Theorem~\ref{T:Positivity for SL(2, R)}. The domain of validity of Equation~\eqref{E:Relation between j-functions-hyperbolic} is unknown to this author.
\end{example}
Before we embark on proving positivity of Kirillov's character for the orbit $\mathcal{O}=\Ad^*(\SL(2, \mathbb{R}))H^*$, let us mention that thanks to the existence of a real polarization, as in the case of nilpotent Lie groups, the coadjoint orbit $\mathcal{O}$ exhibits some affine structure, so  we can expect Fourier analysis techniques to be very useful. In particular, the next example shows that the conclusion of Lemma~\ref{L:Orbit Diffeomorphism} is still valid in this case.
\begin{example}\label{Ex:polarization-sl2-orbit} Recall the notation in Example~\ref{Ex:comparison of j-functions}. In the $H^*X^*Y^*$-coordinate system the coadjoint orbit $\mathcal{O}=\Ad^*(\SL(2, \mathbb{R}))H^*$ is a hyperboloid of one sheet. Let $M$ denote the subgroup generated by $\mathfrak{m}$. A short calculation shows that 
	\begin{align*}
	M&=\exp\mathfrak{m}
	=
	\left\{
	\begin{bmatrix}
	a &*\\
	0&a^{-1}
	\end{bmatrix}
	\Biggm |
	a>0
	\right \}.
	\end{align*}
For $a\in \mathbb{R}^+$ and $b\in \mathbb{R}$, let
	\[
	m=
	\begin{bmatrix}
	a & b\\
	0 & a^{-1}
	\end{bmatrix}\in M.
	\]
Then
$m\cdot H^*=H^*-abX^*+abY^*$. Thus, in the $H^*X^*Y^*$-coordinate system, $\Ad^*(M)H^*=\{(1, -c, c)\mid c\in \mathbb{R}\}$, which represents a line in $\mathfrak{sl}(2, \mathbb{R})^*$. See Figure~\ref{F:hyperboloid}. In this way, symplectic geometry can be seen as contributing to our proof of the positivity of Kirillov's character.
\begin{figure}[h]
	\begin{center}
	\includegraphics[height=25mm]{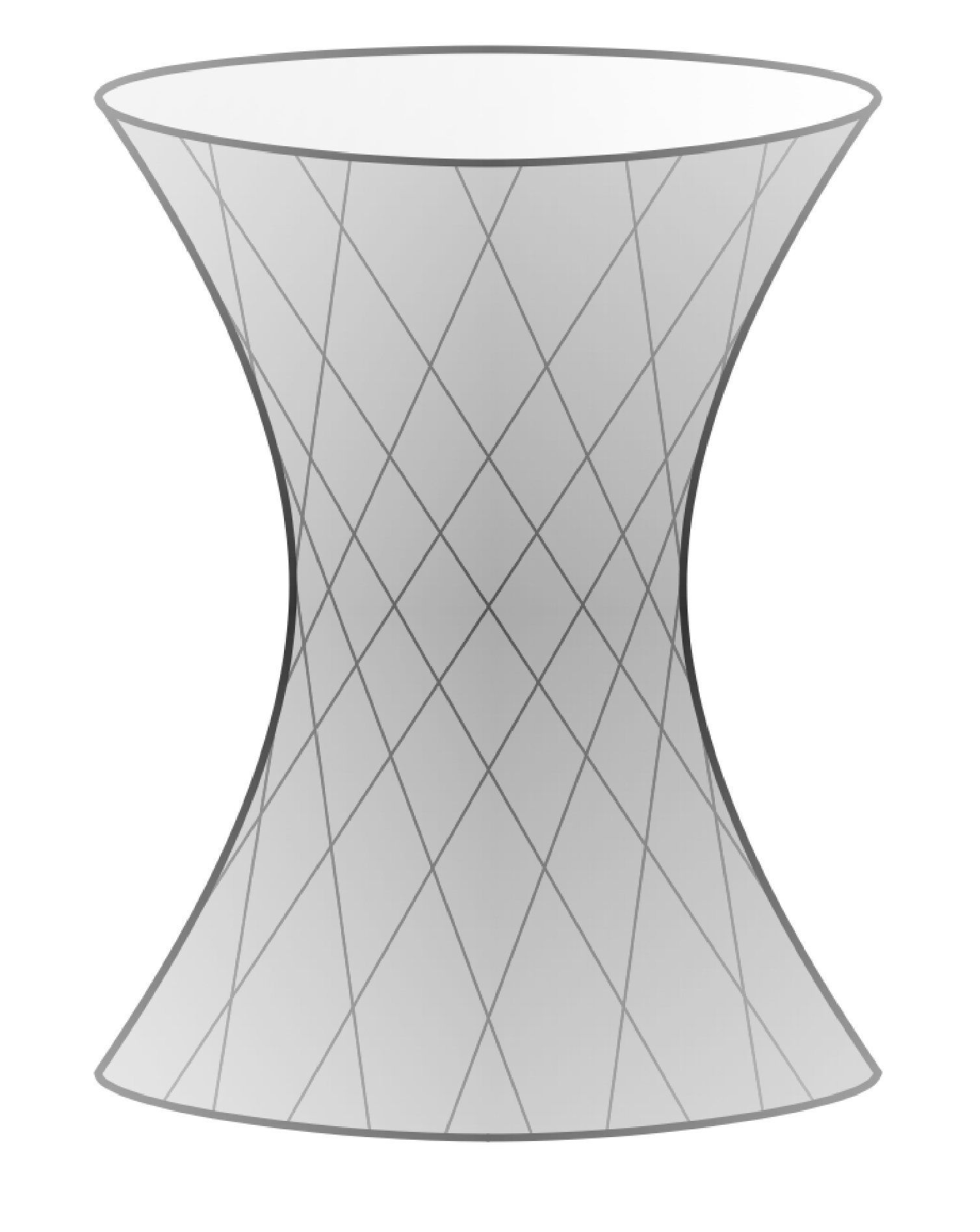}
	\end{center}
	\caption{The coadjoint orbit $\mathcal{O}=\Ad^*(\SL(2, \mathbb{R}))H^*$ as a ruled surface}
	\label{F:hyperboloid}
	\end{figure}
\end{example}
The coadjoint orbit $\mathcal{O}$ above corresponds to a principal series representation of $\SL(2, \mathbb{R})$ which is known to be tempered. So by Rossmann's theorem~\cite{R78}, Kirillov's character $\chi_\mathcal{O}$ is positive. To give a direct proof of this fact, first we establish a generalization of Theorem~\ref{T:Lipsman-Pukanzsky}.
\begin{theorem}\label{T:simplified-character-sl2}
Let $G$ be a unimodular Lie group with Lie algebra $\mathfrak{g}$ and let $\ell\in \mathfrak{g}^*$. Suppose that
	\begin{enumerate}[(i)]
	\item there exists a real polarization $\mathfrak{m}$ subordinate to $\ell$;
	\item the Pukanzsky condition $M/R_\ell\cong (\mathfrak{g}/\mathfrak{m})^*$ of Lemma~\ref{L:Orbit Diffeomorphism} is satisfied; and
	\item $\det
	\left(
	\frac{\sinh(\ad_{\mathfrak{g}} W/2)}
		{\ad_{\mathfrak{g}} W/2}
	\right)
	=
	\det
	\left(
	\frac{\sinh(\ad_{\mathfrak{m}} W/2)}
		{\ad_{\mathfrak{m}} W/2}
	\right)^2,
	\quad W\in \mathfrak{m}$.
	\end{enumerate}
Let $U$ be a sufficiently small neighborhood of $\mathbf{0}\in \mathfrak{g}$ such that the restriction of $\exp\colon \mathfrak{g}\to G$ to $U$ is a diffeomorphism onto $\exp U $. Define $S=\{f\in C^\infty_c(G)\mid  \operatorname{supp}(f)\subset \exp U\}$. Then, with a suitable normalization of measures, for any $\rho$-function $\rho_{(G, M)}$ and any $f\in S$,
	\begin{equation}\label{E:simplified-character-sl2}
	\chi_\mathcal{O}(f)=
	\int_{G/M}\int_{M}
		f(xpx^{-1})e^{i\ell(\log p)}
		\frac{\Delta_M^{-1/2}(p)}{\rho_{(G, M)}(x)}
	\, dp\, dxM
	\end{equation}
where $\chi_\mathcal{O}$ is the Kirillov character for the coadjoint orbit $\mathcal{O}$ through $\ell$.
\end{theorem}
\begin{proof} Assume $f\in S$. By the diffeomorphism $\mathcal{O}\cong G/R_\ell$, which carries one $G$-invariant measure to another, we make a change of variables to obtain
	\begin{align*}
	\chi_{\mathcal{O}}(f)&=
	\int_{\mathcal{O}}\int_{\mathfrak{g}}
		j_{\mathfrak{g}}^{1/2}(X)
		f(\exp X)e^{i\phi(X)}
	\, dX\, d\mu_\mathcal{O}(\phi)\\
	&=
	\int_{G/R_{\ell}}\int_\mathfrak{g}
		j_{\mathfrak{g}}^{1/2}(X)f(\exp X)
		e^{ig\cdot\ell(X)}
	\, dX\, dgR_{\ell}.
	\end{align*}
Recall from Lemma~\ref{L:rho-products} that $\rho_{(G, M)}(xm)\rho_{(M, R_\ell)}(m)=\rho_{(G, M)}(x)$ for $x\in G$ and $m\in M$. Thus, by Corollary~\ref{Co:Quotient integral formula for quasi-invariant measures} and Theorem~\ref{T:QuotientIntegral} we find
	\begin{align*}
	\chi_{\mathcal{O}}(f)&=
	\int_{G/M}\int_{M/R_{\ell}}\int_{\mathfrak{g}}
	j_{\mathfrak{g}}^{1/2}(X)f(\exp X)e^{ixm\cdot\ell(X)}\frac{1}{\rho_{(G, M)}(x)}
	\, dX\, dmR_{\ell}\, dxM\\
	&=
	\left(
	\int_{G/M}\int_{M/R_{\ell}}
	\int_{\mathfrak{g}/\mathfrak{m}}
	\int_{\mathfrak{m}}
	j_{\mathfrak{g}}^{1/2}(P+Y)f(\exp x\cdot(P+Y))e^{im\cdot\ell(P+Y)}
	\right.
	\\
	& \qquad
	\frac{1}{\rho_{(G, M)}(x)}
	\, dP\, dY\, dmR_{\ell}\, dxM
	\left.
	\vphantom{\int_M}
	\right).
	\end{align*}
Here we have made the change of variable $x\mapsto x\cdot X$, and have used the invariance of the Lebesgue measure and $j_\mathfrak{g}$ under the adjoint action of $G$ to simplify.
Define $F_f^x(Y)=\int_{\mathfrak{m}}
	j_{\mathfrak{g}}^{1/2}(P+Y)f(\exp x\cdot(P+Y))e^{i\ell(P+Y)}\frac{1}{\rho_{(G, M)}(x)}
	\, dP$ so that
	\begin{align*}
	\chi_{\mathcal{O}}(f)&=\int_{G/M}\int_{M/R_{\ell}}
	\int_{\mathfrak{g}/\mathfrak{m}}
		F_f^x(Y)
		e^{im\cdot\ell(Y)-i\ell(Y)}
	\, dY\, dmR_{\ell}\, dxM
	\\
	&=
	\int_{G/M}\int_{(\mathfrak{g}/\mathfrak{m})^*}
	\int_{\mathfrak{g}/\mathfrak{m}}
		F_f^x(Y)
		e^{i\gamma(Y)}
	\, dY\, d\gamma
	\, dxM
	\\
	&=
	\int_{G/M}F_f^x(\mathbf{0})\, dxM
	\\
	&=
	\int_{G/M}\int_{\mathfrak{m}}
	j_{\mathfrak{g}}^{1/2}(P)f(\exp (x\cdot P))
	e^{i\ell(P)}\frac{1}{\rho_{(G, M)}(x)}\, dP\, dxM,
	\end{align*}	 
where in the second equality we have used the diffeomorphism $M/R_\ell\cong (\mathfrak{g}/\mathfrak{m})^*$ of Lemma~\ref{L:Orbit Diffeomorphism} whose existence is assumed. The third equality is due to the Fourier inversion theorem. To proceed with the calculations, we observe that since $f\in S$ the change of variables formula applied to the restriction of $\exp$ to $U$ implies
	\[
	\int_G
		\frac{f(g)}{j_{\mathfrak{g}}( \log g)} 
	\, dg
	=
	\int_U
		f(\exp X)
	\, dX.
	\]
Therefore, 
	\begin{align*}
	\chi_\mathcal{O}(f)&=
	\int_{G/M}\int_{\mathfrak{m}}
		j_{\mathfrak{g}}^{1/2}(P)
		f(\exp (x\cdot P))
		e^{i\ell(P)}\frac{1}{\rho_{(G, M)}(x)}
		\, dP\, dxM
	\\
	&
	=\int_{G/M}\int_{M}
		j_{\mathfrak{g}}^{1/2}(\log p)
		\frac{f(xpx^{-1})}{j_{\mathfrak{m}}(\log p)}
		e^{i\ell(\log p)}
		\frac{1}{\rho_{(G, M)}(x)}
	\, dp\, dxM.
	\end{align*}
The third assumption about the relation between the $j$-functions, or equivalently Equation~\ref{E:Relation between j-functions}, allows us to reduce Kirillov's character formula to
	\[
	\chi_\mathcal{O}(f)=
	\int_{G/M}\int_{M}
		f(xpx^{-1})e^{i\ell(\log p)}
		\frac{\Delta_M^{-1/2}(p)}{\rho_{(G, M)}(x)}
	\, dp\, dxM.
	\]
\end{proof}
Finally, we are ready to prove our main positivity result of this section.
\begin{theorem}\label{T:Positivity for SL(2, R)} Let the notation and assumptions be as in Theorem~\ref{T:simplified-character-sl2}. Then 
Kirillov's character $\chi_\mathcal{O}$ is positive on $S$ in the sense that $\chi_\mathcal{O}(f*f^*)\ge 0$ whenever $f*f^*\in S$.
\end{theorem}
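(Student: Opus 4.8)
The plan is to mimic the proof of Theorem~\ref{T:Positivity for nilpotent Lie groups}, using the simplified character formula of Theorem~\ref{T:simplified-character-sl2} as the starting point, while carefully bookkeeping the modular function $\Delta_M$ and the $\rho$-factor $\rho_{(G,M)}$ that were absent in the unimodular nilpotent setting. Concretely, I would apply \eqref{E:simplified-character-sl2} to $f*f^*\in S$ and then expand the convolution. Since $G=\SL(2,\mathbb R)$ is unimodular we have $f^*(g^{-1})=\overline{f(g)}$ and $(f*f^*)(xpx^{-1})=\int_G f(xpx^{-1}g)\overline{f(g)}\,dg$; the change of variable $g\mapsto xg^{-1}$, which preserves Haar measure by unimodularity, turns the integrand into $f(xpg^{-1})\overline{f(xg^{-1})}$, exactly as in the nilpotent case.

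Next I would decompose the inner $G$-integral over $M$ and $G/M$, where the Mackey--Bruhat formula (Theorem~\ref{T:RhoFunction}) replaces Weil's formula. Writing $g=yq$ with $q\in M$ and using the functional equation \eqref{E:Functional equation} together with $\Delta_G\equiv 1$ gives $\rho_{(G,M)}(yq)=\rho_{(G,M)}(y)\Delta_M(q)$, so the decomposition produces an explicit factor $1/(\rho_{(G,M)}(y)\Delta_M(q))$. At this stage the expression is a quadruple integral over $G/M\times G/M\times M\times M$ with integrand $f(xpq^{-1}y^{-1})\overline{f(xq^{-1}y^{-1})}$ weighted by $e^{i\ell(\log p)}\Delta_M^{-1/2}(p)$ together with the various $\rho$ and $\Delta_M$ factors.

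The heart of the argument is to decouple the $p$ and $q$ variables so that the integrand factors as $\Phi_{x,y}(p)\overline{\Phi_{x,y}(q)}$. I would first substitute $p\mapsto pq$ in the innermost $M$-integral. Three facts conspire here: (i) because $\mathfrak m$ is a polarization, $\ell$ annihilates $[\mathfrak m,\mathfrak m]$, so $p\mapsto\ell(\log p)$ is an additive homomorphism on $M$ and $e^{i\ell(\log pq)}$ splits; (ii) $\Delta_M$ is multiplicative, so $\Delta_M^{-1/2}(pq)$ splits; and (iii) right translation scales the left Haar measure of the non-unimodular group $M$ by $\Delta_M$. After this substitution the argument of $f$ becomes $xpy^{-1}$, independent of $q$, and one reads off the natural half-density $\Phi_{x,y}(p)=f(xpy^{-1})e^{i\ell(\log p)}\Delta_M^{-1/2}(p)$. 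I would then apply the inversion substitution $q\mapsto q^{-1}$ to the remaining $q$-integral, using the non-unimodular inversion identity $\int_M g(q)\,dq=\int_M g(q^{-1})\Delta_M(q)^{-1}\,dq$; the leftover powers of $\Delta_M$ should collapse precisely so that the $q$-integral becomes $\overline{\int_M\Phi_{x,y}(q)\,dq}$, leaving the strictly positive outer weight $1/(\rho_{(G,M)}(x)\rho_{(G,M)}(y))$.

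Assembling these steps yields
\[
\chi_\mathcal{O}(f*f^*)=\int_{G/M}\int_{G/M}\frac{1}{\rho_{(G,M)}(x)\rho_{(G,M)}(y)}\left|\int_M \Phi_{x,y}(p)\,dp\right|^2\,dyM\,dxM\ge 0,
\]
since $\rho_{(G,M)}>0$ by Theorem~\ref{T:RhoFunction}. The main obstacle I anticipate is purely bookkeeping: ensuring that the half-integer powers of $\Delta_M$ balance exactly. The factor $\Delta_M^{-1/2}(p)$ in \eqref{E:simplified-character-sl2}---which itself originates in the $j$-function identity \eqref{E:Relation between j functions}, $j_{\mathfrak g}=j_{\mathfrak m}^2/\Delta_M$---is precisely the square-root weight needed to symmetrize the integrand into a modulus square; were this weight absent or carried a different exponent, the two substitutions would leave an unbalanced power of $\Delta_M(q)$ and positivity would fail. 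Verifying that all these factors cancel, and confirming that $p\mapsto\ell(\log p)$ is genuinely a homomorphism on $M$, are the two points I would check most carefully.
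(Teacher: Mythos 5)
Your proposal is correct and takes essentially the same route as the paper's proof: expand the convolution in \eqref{E:simplified-character-sl2}, apply $g\mapsto xg^{-1}$ using unimodularity of $G$, decompose the inner $G$-integral over $M$ and $G/M$ via Theorem~\ref{T:RhoFunction} with $\rho_{(G,M)}(yq)=\rho_{(G,M)}(y)\Delta_M(q)$, and then decouple $p$ from $q$ using the homomorphism property of $\ell\circ\log$ on $M$ together with multiplicativity of $\Delta_M$. The only difference is cosmetic bookkeeping: the paper performs the single substitution $p\mapsto p^{-1}q$ where you perform $p\mapsto pq$ followed by $q\mapsto q^{-1}$, and the two resulting modulus-square expressions agree, since they differ by the inversion $p\mapsto p^{-1}$ under which your integral $\int_M f(xpy^{-1})e^{i\ell(\log p)}\Delta_M^{-1/2}(p)\,dp$ is invariant precisely because of the $\Delta_M^{-1/2}$ weight.
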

\begin{proof}
Let $f*f^*\in S$. Then by \eqref{E:simplified-character-sl2} and writing out the definition of $f*f^*$ we have
	\begin{align*}
	\chi_\mathcal{O}(f*f^*)
	&=
	\int_{G/M}\int_M 
		f*f^*(xpx^{-1})
		e^{i\ell (\log p)}
		\frac{\Delta_M^{-1/2}(p)}{\rho_{(G, M)}(x)}
	\,dp\,dxM
	\\
	&=
	\int_{G/M}
	\left(
	\int_M\int_G 
		f(xpx^{-1}g)f^*(g^{-1})
		e^{i\ell (\log p)}
		\frac{\Delta_M^{-1/2}(p)}{\rho_{(G, M)}(x)}
	\,dg\,dp\,
	\right)
	dxM
	\\
	&=
	\int_{G/M}
	\left(
	\int_M\int_G 	
		f(xpx^{-1}g)
		\overline{f(g)}e^{i\ell (\log p)}
		\frac{\Delta_M^{-1/2}(p)}{\rho_{(G, M)}(x)}
	\,dg\,dp\,
	\right)
	dxM.
	\end{align*}
Applying the change of variable $g\mapsto xg^{-1}$ to the innermost integral and using the unimodularity of $G$ the last integral simplifies to
	\[
	\int_{G/M}
	\left(
	\int_M\int_G 
		f(xpg^{-1})
		\overline{f(xg^{-1})}
		e^{i\ell (\log p)}
		\frac{\Delta_M^{-1/2}(p)}{\rho_{(G, M)}(x)}
	\,dg\,dp\,	
	\right)
	dxM.
	\]
Now, we decompose the measure of $G$ over $M$ and $G/M$ by combining Equation~\ref{E:Functional Equation (G, M)} with the quotient integral formula in Theorem~\ref{T:RhoFunction}.
	\begin{align*}
	\chi_\mathcal{O}(f*f^*)=
	\int_{G/M}
	&\left(
	\int_M\int_{G/M}\int_M 
		f(xpq^{-1}y^{-1})
		\overline{f(xq^{-1}y^{-1})}
		e^{i\ell (\log p)}
	\right.
	\\
	&\quad
	\frac{\Delta_M^{-1/2}(p)}{\rho_{(G, M)}(x)}
	\frac{\Delta^{-1}_M(q)}{\rho_{(G, M)}(y)}	
	\,dq\,dyM\,dp
	\left.
	\vphantom{\int_M}
	\right)
	dxM.
	\end{align*}
Finally, we change the order of the two integrations in the middle and use the formulas in Definition~\ref{D:modular-function} to make the change of variable $p\mapsto p^{-1}q$. 
	\begin{align*}
	&\chi_\mathcal{O}(f*f^*)=
	\left(
	\int_{G/M}\int_{G/M}\int_M\int_M 
		f(xp^{-1}y^{-1})
		\overline{f(xq^{-1}y^{-1})}
		e^{i\ell\left( \log p^{-1}q\right)}
	\right.
	\\
	&\hphantom{\chi_\mathcal{O}(f*f^*)=\int} 	
	\frac{\Delta_M^{-1/2}(p)}{\rho_{(G, M)}(x)}
	\frac{\Delta_M^{-1/2}(q)}{\rho_{(G, M)}(y)}	
	\,dq\,dp\,dyM\,dxM
	\left.
	\vphantom{\int_M}\right)
	\\
	&\hphantom{\chi_\mathcal{O}(f*f^*)} 
	=\left(
	\int_{G/M}\int_{G/M}
	\right.
	\left|
	\int_M 	
	f(xp^{-1}y^{-1})
	e^{-i\ell (\log p)}
	\Delta_M^{-1/2}(p)
	\,dp
	\right|^2 
	\\
	&\hphantom{\chi_\mathcal{O}(f*f^*)=\int}
	\frac{1}{\rho_{(G, M)}(x)}
	\frac{1}{\rho_{(G, M)}(y)}	
	dyM\,dxM
	\left.
	\vphantom{\int_M}\right)
	\ge 0.
	\end{align*}
\end{proof}
\begin{corollary} \hfill \label{C:positivity-of-nilpotents-and-sl2}
\begin{enumerate}[(i)]
\item Let $G$ be a connected (but not necessarily simply connected) nilpotent Lie group. Then for any coadjoint orbit $\mathcal{O}$ of $G$, Kirillov's character $\chi_\mathcal{O}$ is positive on the convolution algebra of Schwartz functions on $G$.
\item Kirillov's character $\chi_\mathcal{O}$ is positive for $\mathcal{O}=\Ad^*(\SL(2, \mathbb{R}))H^*$.
\end{enumerate}
\end{corollary}
\begin{proof} If $G$ is a connected nilpotent Lie group, the first two conditions of Theorem~\ref{T:simplified-character-sl2} are satisfied as stated in Definition~\ref{D:real-polarization} and Lemma~\ref{L:Orbit Diffeomorphism}. The last condition about the relation between the $j$-functions is vacuous since $j\equiv 1$ for any nilpotent Lie group. Finally, observe that in this case we may let $S=C_c^\infty(G)$, or even $S=\mathcal{S}(G)$.

The conditions of Theorems~\ref{T:simplified-character-sl2} and \ref{T:Positivity for SL(2, R)} were verified in Examples~\ref{Ex:polarization-sl2-orbit} and \ref{Ex:comparison of j-functions} for $\mathcal{O}=\Ad^*(\SL(2, \mathbb{R}))H^*$.
\end{proof}
\begin{example} Let $E_{ij}$ be the $3\times 3$ matrix with a one in the $ij$ entry and zeros elsewhere. To simplify the notation, let us write $F$ and $G$ for $E_{11}-E_{22}$ and $E_{11}-E_{33}$, respectively. Then a basis for the eight-dimensional Lie algebra $\mathfrak{g}=\mathfrak{sl}(3, \mathbb{R})$ is given by
	\[
	\mathfrak{B}
	=
	\{
	F, G,
	E_{12}, E_{23}, E_{13}, E_{21}, E_{32}, 
	E_{31}
	\}.
	\]
Consider the polarizing subalgebra
	\[
	\mathfrak{m}=\operatorname{Span}\{F, G, E_{12}, E_{23}, E_{13} \}
	\]
subordinate to $F^*$ consisting of upper triangular matrices in $\mathfrak{sl}(3, \mathbb{R})$. For $W=fF+gG+e_3E_{12}+e_2E_{13}+e_1E_{23}$ in $\mathfrak{m}$, the matrix of $\ad_{\mathfrak{m}} W\colon \mathfrak{m}\to \mathfrak{m}$ 
with respect to the basis 
$\mathfrak{B}'=\{F, G, E_{12}, E_{23}, E_{13}\}$
of $\mathfrak{m}$ is
	\[
	[\ad_{\mathfrak{m}} W]_{\mathfrak{B}'}
	=
	\left[
	\begin{array}{cc|ccc}
	0&0&0&0&0
	\\
	0&0&0&0&0
	\\
	\hline
	-2e_3&-e_3&2f+g&0&0
	\\
	e_1&-e_1&0&g-f&0
	\\
	-e_2&-2e_2&-e_1&e_3&f+2g
	\end{array}
	\right].
	\]
Extend the basis $\mathfrak{B}'$ of $\mathfrak{m}$ to the basis 
$\mathfrak{B}$
of $\mathfrak{g}$. Then the matrix of $\ad_{\mathfrak{g}} W\colon \mathfrak{g}\to \mathfrak{g}$ 
with respect to the basis $\mathfrak{B}$ is
	\[
	[
	\ad_{\mathfrak{g}} W
	]_{\mathfrak{B}}
	=
	\left[
	\begin{array}{c|ccc}
	%\multicolumn{2}{c}{
	\multirow{5}{*}{$[\ad_{\mathfrak{m}} W]_{\mathfrak{B}'}$}
	&
	e_3&0&-e_1
	\\
	&
	0&e_2&e_1
	\\
	&
	0&0&e_2
	\\
	&
	-e_2&0&0
	\\
	&
	0&0&0
	\\
	\hline
	\multirow{3}{*}{\Zero}
	&
	-2f-g&0&e_1
	\\
	&
	0&f-g&-e_3
	\\
	&
	0&0&-f-2g
	\end{array}
	\right].
	\]
As in the case of $\SL(2,\mathbb{R})$, this calculation shows that Equation~\eqref{E:Relation between j-functions} holds. So one can obtain the positivity of Kirillov's character for $\mathcal{O}=\Ad^*(\SL(3, \mathbb{R}))F^*$ through Theorem~\ref{T:Positivity for SL(2, R)}.
\end{example}

\section{Construction of Representations for Nilpotent Lie Groups}
In this section we exploit the positivity of Kirillov's character for a connected, simply connected nilpotent Lie group $G$ to construct some representations of $G\times G$. Our method is analogous to the GNS construction in $\mathrm{C}^*$-algebra theory.

Let $G$ be a connected, simply connected nilpotent Lie group. Then for any coadjoint orbit $\mathcal{O}$ of $G$, Kirillov's character $\chi_{\mathcal{O}} \colon C_c^{\infty}(G)\to \mathbb{C}$ is a positive distribution as we proved directly in Corollary~\ref{C:positivity-of-nilpotents-and-sl2}. Moreover, it is straightforward to check that $\chi_{\mathcal{O}}(f_1*f_2^*)=\overline{\chi_\mathcal{O} (f_2*f_1^*)}$. Hence,
\begin{equation}\label{E:Sesquilinear form}
\langle f_1, f_2\rangle_{\chi}=\chi_{\mathcal{O}}(f_1*f_2^*)
\end{equation}
defines a sesquilinear form on $C^{\infty}_c(G)$ that satisfies all the axioms for an inner product except for nondegeneracy, that is, $\langle f, f\rangle_{\chi}=0$ need not imply $f=0$. The remaining axioms are enough to prove the Cauchy--Schwarz inequality
	\begin{equation}\label{E:Cauchy-Schwarz inequality}
	|\langle f_1, f_2\rangle_{\chi}|^2
	\le 
	\langle f_1, f_1\rangle_{\chi}
	\langle f_2, f_2\rangle_{\chi},
	\end{equation}
from which it follows that the set $N=\{f\in C_c^\infty(G)\mid \langle f, f\rangle_{\chi} =0\}$
is a vector subspace of $C_c^\infty(G)$. The formula 
	\begin{equation}\label{E:Inner product on the quotient}
	\langle f_1+N, f_2+N\rangle
	=
	\langle f_1, f_2\rangle_{\chi}
	\end{equation}
defines an inner product on the quotient space $C_c^\infty(G)/N$. We let $\mathcal{H}_\sigma$ denote the Hilbert space completion of $C_c^\infty(G)/N$, and realize $\mathcal{H}_\sigma$ as a representation of $G\times G$ as follows. For $g_1, g_2\in G$ and $f\in C_c^\infty (G)$ define $\lambda_{g_1}\circ \rho_{g_2}(f)$ by
	\[
	\lambda_{g_1}\circ \rho_{g_2}(f)(x)=f(g_1^{-1}xg_2), \quad x\in G.
	\]
Now, we show that this extends to a unitary representation $\sigma$ of $G\times G$ on $\mathcal{H}_\sigma$.
\begin{lemma}\label{L:sesquilinear form preserving}
The left translation $(\lambda, C_c^\infty(G))$ and the right translation $(\rho, C_c^\infty(G))$ preserve the sesquilinear form \eqref{E:Sesquilinear form}. That is, for any $g\in G$,
	\begin{align*}
	 \langle \rho_g f_1, \rho_g f_2\rangle_{\chi}
	&=\langle f_1, f_2\rangle_{\chi},
	\text{ and},\\
	\langle \lambda_g f_1, \lambda_g f_2\rangle_{\chi}
	&=\langle f_1, f_2\rangle_{\chi}.
	\end{align*}
\end{lemma}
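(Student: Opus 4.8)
The plan is to reduce both identities to how the translation operators interact with convolution and the involution, and then to invoke the conjugation-invariance of Kirillov's character $\chi_\mathcal{O}$. Since $G$ is connected nilpotent it is unimodular, so the involution simplifies to $f^*(x)=\overline{f(x^{-1})}$; I will use this throughout, together with the invariance of Haar measure under the substitutions below.

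First I would treat the right translation $\rho_g f(x)=f(xg)$. Writing $(\rho_g f_2)^*(x)=\overline{f_2(x^{-1}g)}$ and unwinding the convolution gives
\[
\rho_g f_1 * (\rho_g f_2)^*(x)=\int_G f_1(yg)\,\overline{f_2(x^{-1}yg)}\, dy .
\]
The substitution $z=yg$, measure-preserving by unimodularity, collapses this to $\int_G f_1(z)\overline{f_2(x^{-1}z)}\,dz=(f_1 * f_2^*)(x)$. Hence $\rho_g f_1 * (\rho_g f_2)^*=f_1 * f_2^*$ as functions, and applying $\chi_\mathcal{O}$ yields the right-invariance at once, using nothing about $\chi_\mathcal{O}$ beyond linearity.

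The left translation $\lambda_g f(x)=f(g^{-1}x)$ is the substantive case. The same unwinding, now with the substitution $z=g^{-1}y$, gives
\[
\lambda_g f_1 * (\lambda_g f_2)^*(x)=\int_G f_1(z)\,\overline{f_2(g^{-1}x^{-1}gz)}\,dz=(f_1 * f_2^*)(g^{-1}xg);
\]
that is, left translation \emph{conjugates} $f_1 * f_2^*$ by $g$ rather than fixing it. To conclude $\langle \lambda_g f_1,\lambda_g f_2\rangle_\chi=\langle f_1,f_2\rangle_\chi$ I therefore need that $\chi_\mathcal{O}$ is a class function, i.e.\ $\chi_\mathcal{O}(f\circ c_g)=\chi_\mathcal{O}(f)$ for $c_g(x)=g^{-1}xg$. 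This I would verify directly from the orbital integral defining $\chi_\mathcal{O}$: since $g^{-1}(\exp X)g=\exp(\Ad(g^{-1})X)$, the change of variable $X\mapsto \Ad(g)X$ leaves $dX$ invariant (again $|\det\Ad(g)|=1$) and turns $e^{i\ell(X)}$ into $e^{i(\Ad^*(g^{-1})\ell)(X)}$; the accompanying reparametrization $\ell\mapsto \Ad^*(g^{-1})\ell$ preserves both the orbit $\mathcal{O}$ and its Liouville measure $\mu_\mathcal{O}$, so the integral is unchanged.

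The routine bookkeeping in the two convolution identities is not where any difficulty lies; the one genuinely structural input is the coadjoint-invariance of the Liouville measure $\mu_\mathcal{O}$, which is precisely what powers the class-function property and hence the left-translation invariance. Everything else is an application of unimodularity.
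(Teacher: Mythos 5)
Your proof is correct and follows essentially the same route as the paper: right-invariance falls out of Haar-measure invariance alone, while left translation conjugates $f_1*f_2^*$ by $g$, so the key input is the class-function property of $\chi_\mathcal{O}$, verified exactly as in the paper via the $\Ad$-invariance of $dX$ and the $\Ad^*$-invariance of the Liouville measure $\mu_\mathcal{O}$. The only cosmetic difference is that you state the identity $\lambda_g f_1 * (\lambda_g f_2)^* = (f_1*f_2^*)\circ c_g$ explicitly, which the paper leaves implicit in its computation.
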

\begin{proof}
The left translation invariance of the Haar measure of $G$ gives $f_1*f_2^*(x)=\int_G f_1(y)\overline{f_2(x^{-1}y)}\, dy=\int_G f_1(xy)\overline{f_2(y)}\, dy$. This, combined with the right translation invariance of the Haar measure, implies the right translation invariance of the sesquilinear form. To prove the second assertion, we observe that
	\begin{align*}
	\lambda_g f_1* (\lambda_gf_2)^*(x)
	&=
	\int_G
		f_1(g^{-1}xy)
		\overline{f_2(g^{-1}y)}
	\, dy
	\\
	&=
	\int_G
		f_1(g^{-1}xgy)
		\overline{f_2(y)}
	\, dy,
	\end{align*}
and that the left translation invariance of the sesquilinear form, namely, 
	\[
	\langle \lambda_g f_1, \lambda_g f_2\rangle_{\chi}
	=\langle f_1, f_2\rangle_{\chi}
	\]
follows from the conjugation invariance of the character formula. In more details,
	\begin{align*}
	\chi_\mathcal{O}(c_g\cdot f)&=
	\int_\mathcal{O}\int_\mathfrak{g} f(g^{-1}\exp Xg)e^{i\ell(X)}\, dX\, d\mu_\mathcal{O}(\ell)\\
	&=
	\int_\mathcal{O}\int_G f(g^{-1}xg)e^{i\ell(\log x)}\, dx\, d\mu_\mathcal{O}(\ell)\\
	&=
	\int_\mathcal{O}\int_G f(x)e^{ig^{-1}\cdot\ell(\log x)}\, dx\, d\mu_\mathcal{O}(\ell)\\
	&=
	\int_\mathcal{O}\int_G f(x)e^{i\ell(\log x)}\, dx\, d\mu_\mathcal{O}(\ell)=\chi_\mathcal{O}(f),
	\end{align*}
where we have used the $G$-invariance of the canonical measure $\mu_\mathcal{O}$ in the second to last equality.
\end{proof}
\begin{corollary} The translation maps $(\lambda, C_c^\infty(G))$ and $(\rho, C_c^\infty(G))$ extend to the unitary representations $(L, \mathcal{H}_\sigma)$ and $(R, \mathcal{H}_\sigma)$ of $G$ via
\[
	L_g[f]=[\lambda_g f]
	\text{ and }
	R_g[f]=[\rho_g f].
\]
Therefore, $(\sigma, \mathcal{H}_\sigma)$ defined by $\sigma(g_1, g_2)=L_{g_1}\circ R_{g_2}$ is a unitary representation of $G\times G$.
\end{corollary}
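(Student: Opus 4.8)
The plan is to verify, in order, that $L_g$ and $R_g$ are well defined on the quotient $C_c^\infty(G)/N$, that they are unitary, that they respect the group law, and that they are strongly continuous; the product representation $\sigma$ then drops out once I check that $L$ and $R$ commute. The one genuinely analytic point will be strong continuity.

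First I would invoke Lemma~\ref{L:sesquilinear form preserving} to push the operators down to the quotient. Since $\lambda_g$ and $\rho_g$ preserve the form, they preserve its null space $N$: if $\langle f, f\rangle_{\chi}=0$ then $\langle \lambda_g f, \lambda_g f\rangle_{\chi}=\langle f, f\rangle_{\chi}=0$, so $\lambda_g f\in N$, and similarly $\rho_g f\in N$. Hence $L_g[f]=[\lambda_g f]$ and $R_g[f]=[\rho_g f]$ are well-defined linear maps on $C_c^\infty(G)/N$. The Lemma shows that each preserves the inner product \eqref{E:Inner product on the quotient}, so each is an isometry of the pre-Hilbert space $C_c^\infty(G)/N$; because $\lambda_g\lambda_{g^{-1}}=\id$ on $C_c^\infty(G)$ (and likewise for $\rho$), the map $L_{g^{-1}}$ is a two-sided inverse of $L_g$ on the quotient, so $L_g$ is a surjective isometry. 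An isometry extends uniquely to the completion $\mathcal{H}_\sigma$, and a surjective isometry of a Hilbert space is unitary; thus $L_g$ and $R_g$ extend to unitary operators on $\mathcal{H}_\sigma$.

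Next I would record the homomorphism property and then attack continuity. A direct computation from $\lambda_g f(x)=f(g^{-1}x)$ gives $\lambda_{g_1}\lambda_{g_2}=\lambda_{g_1g_2}$ and $\rho_{g_1}\rho_{g_2}=\rho_{g_1g_2}$ on $C_c^\infty(G)$; these identities descend to the quotient and extend to $\mathcal{H}_\sigma$ by density, so $g\mapsto L_g$ and $g\mapsto R_g$ are homomorphisms into the unitary group. The main obstacle is strong continuity. For $f\in C_c^\infty(G)$ and $g\to g_0$ I would estimate
\[
\|L_g[f]-L_{g_0}[f]\|_{\mathcal{H}_\sigma}^2=\chi_\mathcal{O}\bigl((\lambda_g f-\lambda_{g_0}f)*(\lambda_g f-\lambda_{g_0}f)^*\bigr).
\]
The key facts are that $g\mapsto \lambda_g f$ is continuous from $G$ into $C_c^\infty(G)$ with its usual (LF) topology — for $g$ near $g_0$ the supports remain in a fixed compact set and all derivatives converge uniformly — and that convolution together with the involution $f\mapsto f^*$ are continuous for this topology. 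Since $\chi_\mathcal{O}$ is a distribution, hence continuous on $C_c^\infty(G)$, the right-hand side tends to $0$. This yields continuity of $g\mapsto L_g[f]$ on the dense subspace $C_c^\infty(G)/N$; a standard density-plus-uniform-boundedness argument (the $L_g$ are unitary, hence of norm one) upgrades this to continuity of $g\mapsto L_g v$ for every $v\in\mathcal{H}_\sigma$, i.e. strong continuity, and the same reasoning handles $R$.

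Finally I would assemble the two representations. Left and right translations commute, $(\lambda_{g_1}\rho_{g_2}f)(x)=f(g_1^{-1}xg_2)=(\rho_{g_2}\lambda_{g_1}f)(x)$, so passing to $\mathcal{H}_\sigma$ gives $L_{g_1}R_{g_2}=R_{g_2}L_{g_1}$. Consequently $\sigma(g_1,g_2)=L_{g_1}\circ R_{g_2}$ satisfies $\sigma(g_1,g_2)\sigma(g_1',g_2')=\sigma(g_1g_1',g_2g_2')$ and $\sigma(e,e)=\id$, so $\sigma$ is a homomorphism from $G\times G$ into the unitary group of $\mathcal{H}_\sigma$; its strong continuity follows from that of $L$ and $R$ together with the joint strong continuity of operator multiplication on norm-bounded sets. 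Therefore $(\sigma,\mathcal{H}_\sigma)$ is a unitary representation of $G\times G$, as claimed.
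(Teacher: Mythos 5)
Your proof is correct and takes essentially the same route as the paper: well-definedness and unitarity are deduced from Lemma~\ref{L:sesquilinear form preserving} together with the Cauchy--Schwarz inequality~\eqref{E:Cauchy-Schwarz inequality}, and strong continuity is transferred from the translation action on $C_c^\infty(G)$ to $\mathcal{H}_\sigma$ through the inner product~\eqref{E:Inner product on the quotient}, using that $\chi_\mathcal{O}$ is a continuous (tempered) distribution. The paper states this argument in two terse sentences; your write-up simply makes explicit the steps it leaves implicit (null-space preservation, the homomorphism property, commutation of $L$ and $R$, and the density-plus-uniform-boundedness upgrade of continuity).
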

\begin{proof}
The well definedness and unitarity of the operators $L_g$ and $R_g$ are immediate from the Cauchy--Schwarz inequality~\eqref{E:Cauchy-Schwarz inequality} and Lemma~\ref{L:sesquilinear form preserving}.
Moreover, the strong continuity of the left and right regular representations $(\lambda, C_c^{\infty}(G))$ and $(\rho, C_c^{\infty}(G))$ at $g=e_G$ is transferred to $(L, \mathcal{H}_\sigma)$ and $(R, \mathcal{H}_\sigma)$ via the inner product formula~\eqref{E:Inner product on the quotient} and this completes the proof.
\end{proof}

Suppose that the coadjoint orbit $\mathcal{O}$ corresponds, under the Kirillov quantization, to (the class of) the irreducible unitary representation $(\pi, \mathcal{K}_\pi)$ of $G$. A natural question that arises is how the Kirillov $G$-representation associated to $\mathcal{O}$, namely $\mathcal{K}_\pi$, and our $G\times G$-representation $\mathcal{H}_\sigma$---obtained by applying the GNS construction to the positive distribution $\chi_\mathcal{O}$---are related. The answer is given by the next theorem.
\begin{theorem}\label{T:comparison of quantizations} Let $\operatorname{HS}(\mathcal{K}_\pi)$ denote the space of Hilbert--Schmidt operators on $\mathcal{K}_\pi$, and let  $(\eta, \operatorname{HS}(\mathcal{K}_\pi))$ be the representation of $G\times G$ given by $\eta(x, y)(T)=\pi(x)T\pi(y^{-1})$. We have the following $G\times G$-equivariant isometric isomorphisms
	\[
	\mathcal{H}_\sigma\cong \operatorname{HS}(\mathcal{K}_\pi)
	\cong\mathcal{K}_\pi\otimes \mathcal{K}_\pi^*.
	\]
\end{theorem}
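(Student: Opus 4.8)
The plan is to realize the abstract GNS space $\mathcal{H}_\sigma$ inside $\operatorname{HS}(\mathcal{K}_\pi)$ through the representation $\pi$ itself. The link is Kirillov's character formula together with Theorem~\ref{T:trace class-nilpotent}: since $G$ is connected and simply connected nilpotent, $\exp$ is a global diffeomorphism and Theorem~\ref{T:CharacterFormula} gives $\chi_\mathcal{O}(f)=\Tr\pi(f)$ for every $f\in C_c^\infty(G)$, with $\pi(f)$ of trace class. Because $\pi$ is a $*$-homomorphism, $\pi(f_1*f_2^*)=\pi(f_1)\pi(f_2)^*$, so the defining form \eqref{E:Sesquilinear form} unwinds as
$$
\langle f_1,f_2\rangle_\chi=\chi_\mathcal{O}(f_1*f_2^*)=\Tr\bigl(\pi(f_1)\pi(f_2)^*\bigr)=\langle \pi(f_1),\pi(f_2)\rangle_{\operatorname{HS}}.
$$
In particular $\langle f,f\rangle_\chi=\|\pi(f)\|_{\operatorname{HS}}^2$, so each $\pi(f)$ is Hilbert--Schmidt and the null space $N=\{f:\langle f,f\rangle_\chi=0\}$ is precisely the kernel of $f\mapsto\pi(f)$. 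Hence this map descends to a linear map $C_c^\infty(G)/N\to\operatorname{HS}(\mathcal{K}_\pi)$ that is isometric for the inner product \eqref{E:Inner product on the quotient} and the Hilbert--Schmidt inner product, and it extends by continuity to an isometry $U\colon\mathcal{H}_\sigma\to\operatorname{HS}(\mathcal{K}_\pi)$. Being an isometry out of a complete space, $U$ has closed range.

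Next I would verify $G\times G$-equivariance. Using unimodularity of $G$ and the substitution $x\mapsto g_1xg_2^{-1}$, which preserves Haar measure,
$$
\pi\bigl(\lambda_{g_1}\rho_{g_2}f\bigr)=\int_G f(g_1^{-1}xg_2)\,\pi(x)\,dx=\pi(g_1)\,\pi(f)\,\pi(g_2^{-1})=\eta(g_1,g_2)\bigl(\pi(f)\bigr),
$$
so $U$ intertwines $\sigma=L\circ R$ with $\eta$. For the second isomorphism I would invoke the standard isometry $\operatorname{HS}(\mathcal{K}_\pi)\cong\mathcal{K}_\pi\otimes\mathcal{K}_\pi^*$ sending the rank-one operator $u\mapsto\langle u,w\rangle v$ to $v\otimes w^*$; a direct computation, using $\pi(y^{-1})^*=\pi(y)$, shows that $\eta(x,y)$ carries this operator to the one attached to $\pi(x)v\otimes(\pi(y)w)^*$. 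Thus under this identification $\eta$ becomes the external tensor product $\pi\boxtimes\overline{\pi}$ on $\mathcal{K}_\pi\otimes\mathcal{K}_\pi^*$, and both displayed isomorphisms are $G\times G$-equivariant.

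It remains to show $U$ is onto, which I expect to be the main obstacle, since this is where irreducibility must enter. The closed $G\times G$-invariant subspace $U(\mathcal{H}_\sigma)\subseteq\operatorname{HS}(\mathcal{K}_\pi)$ is nonzero because $\chi_\mathcal{O}$ is not identically zero. Since $\pi$ is irreducible its commutant is $\mathbb{C}\,\id$, and by the commutation theorem for von Neumann tensor products the commutant of $\eta(G\times G)$ reduces to $\pi(G)'\,\overline{\otimes}\,\overline{\pi}(G)'=\mathbb{C}\,\id$; that is, $\eta$ is an irreducible representation of $G\times G$. A nonzero closed invariant subspace of an irreducible representation is the whole space, so $U(\mathcal{H}_\sigma)=\operatorname{HS}(\mathcal{K}_\pi)$ and $U$ is an isometric isomorphism.

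Alternatively, surjectivity can be obtained by hand: the range $\mathcal{M}=\overline{\pi(C_c^\infty(G))}$ is stable under left and right multiplication by every $\pi(g)$, hence---because the pairings $A\mapsto\Tr(A\,SR^*)$ against the trace-class operators $SR^*$ are normal and $\pi(G)''=\mathcal{B}(\mathcal{K}_\pi)$ by irreducibility---under left and right multiplication by all of $\mathcal{B}(\mathcal{K}_\pi)$; sandwiching a fixed nonzero element of $\mathcal{M}$ between rank-one operators then produces every rank-one operator, whose span is dense in $\operatorname{HS}(\mathcal{K}_\pi)$. Either route yields the two isometric, $G\times G$-equivariant isomorphisms, and one should only take care that the identity $\chi_\mathcal{O}(f)=\Tr\pi(f)$ is applied to the Schwartz function $f_1*f_2^*$, which is legitimate precisely because $G$ is simply connected nilpotent.
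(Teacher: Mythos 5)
Your proposal is correct, and its core is exactly the paper's argument: use $\chi_\mathcal{O}(f)=\Tr\pi(f)$ from Theorem~\ref{T:CharacterFormula} to identify $\langle f_1,f_2\rangle_\chi$ with the Hilbert--Schmidt pairing $\Tr\bigl(\pi(f_1)\pi(f_2)^*\bigr)$, conclude that $N=\ker\bigl(f\mapsto\pi(f)\bigr)$, let $[f]\mapsto\pi(f)$ induce an isometry $U\colon\mathcal{H}_\sigma\to\operatorname{HS}(\mathcal{K}_\pi)$, verify $G\times G$-equivariance by the same change-of-variables computation, and finish with the canonical identification $\operatorname{HS}(\mathcal{K}_\pi)\cong\mathcal{K}_\pi\otimes\mathcal{K}_\pi^*$. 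The one place you genuinely diverge is surjectivity: the paper simply cites Dixmier (Section~18.8 of \cite{D77}), whereas you give two self-contained arguments, and both are sound. The first (irreducibility of $\eta$: since $\pi(G)'=\mathbb{C}\id$, the commutation theorem for von Neumann tensor products gives $\eta(G\times G)'=\mathbb{C}\id$, so the closed, nonzero, invariant range of $U$ must be everything) is clean and uses the equivariance you already established. The second (the range is a norm-closed, hence weakly closed, subspace stable under left and right multiplication by $\pi(G)$, hence by its $\sigma$-weak closure $\pi(G)''=\mathcal{B}(\mathcal{K}_\pi)$ because pairing against trace-class operators is normal; sandwiching by rank-one operators then yields all rank-one operators) is essentially a proof of the fact Dixmier is being cited for. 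Either version makes the proof self-contained, which is a modest but real improvement over the paper's deferral; your added detail that $\eta$ becomes $\pi\boxtimes\overline{\pi}$ under the tensor identification also makes the equivariance of the second isomorphism explicit, which the paper leaves implicit.
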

\begin{proof} First, recall from Theorem~\ref{T:CharacterFormula} the fact that $\chi_\mathcal{O}(f)=\Tr\pi (f)$. Using this, we have $\langle f, f\rangle_{\chi}=\|\pi(f)\|_{\text{HS}}^2$. Hence, $N=\{f\in C_c^\infty(G)\mid \langle f, f\rangle_{\chi} =0\}=\{f\in C_c^\infty(G)\mid \pi (f)=0\}$. Thus, $[f]\mapsto \pi(f)$ gives a well-defined, injective, linear map $C_c^\infty(G)/N\to \operatorname{HS}(\mathcal{K}_\pi)$. 
Since
	\begin{align*}
	\pi(\sigma(g_1, g_2) f)&=\int_Gf(g^{-1}xg_2)\pi(x)\, dx=\int_Gf(x)\pi(g_1xg_2^{-1})\, dx\\
	&=\pi(g_1)\pi(f)\pi(g_2^{-1})=\eta(g_1, g_2)\pi(f),
	\end{align*}
$[f]\mapsto \pi(f)$ extends to a $G\times G$-equivariant isometric isomorphism of $\mathcal{H}_\sigma$ and $\operatorname{HS}(\mathcal{K}_\pi)$. (For surjectivity, we refer to \cite[Section 18.8]{D77}). This establishes the first isomorphism. The second isomorphism, between $\operatorname{HS}(\mathcal{K}_\pi)$ and $\mathcal{K}_\pi\otimes \mathcal{K}_\pi^*$, is given by the very definition of the tensor product.
\end{proof}
Finally, we make a remark about applying the constructions in this section to non-nilpotent Lie groups. Note that even when the Kirillov character formula is positive, it does not obviously determine a representation. This is because the exponential map is neither injective nor surjective in general, and the test functions supported in a fixed neighborhood of the identity element of the group do not necessarily form an algebra under the convolution operation. This leads to the following question by Higson \cite{Higson13} ``Is there a useful concept of partial representation corresponding to the partially-defined Kirillov character?''
\section{Conclusion} 
By a direct proof, we have shown that Kirillov's character defines a positive trace on the convolution algebra of smooth, compactly supported functions on a connected, simply connected nilpotent Lie group $G$. Then using this and the GNS construction we have produced a unitary representation of $G\times G$ and have established a $G\times G$-equivariant isometric isomorphism between our representation and the Hilbert--Schmidt operators on the underlying representation of $G$. We have proved the positivity of Kirillov's character more generally for coadjoint orbits of unimodular Lie groups under a few additional conditions that are automatically satisfied in the nilpotent case. One hypothesis is the geometric condition required by Lemma~\ref{L:Orbit Diffeomorphism} (which is indeed a statement about Lagrangian fibrations); the other is the existence of real polarizations satisfying Equation~\eqref{E:Relation between j-functions-hyperbolic}.
\section*{Acknowledgement}
The author wishes to thank Nigel Higson for posing the central question in this project and also for numerous conversations about it.

\end{document}